\pgfplotsset{compat=newest}
\pgfplotsset{plot coordinates/math parser=false}
\newlength\figureheight
\newlength\figurewidth
\DeclareMathOperator*{\minimize}{min}
\DeclareMathOperator*{\sbjto}{s.\ t.}
\DeclareMathOperator{\sat}{sat}
\DeclareMathOperator{\trace}{tr}
\renewcommand{\le}{\leqslant}
\renewcommand{\leq}{\leqslant}
\renewcommand{\geq}{\geqslant}
\newcommand{\R}{\mathds{R}}
\newcommand{\Nz}{\mathds{N}}
\newcommand{\N}{\mathds{N}^\ast}
\newcommand{\EE}{\mathds{E}}
\newcommand{\PP}{\mathds{P}}
\newcommand{\bmat}[1]{\begin{bmatrix}#1\end{bmatrix}}
\newcommand{\abs}[1]{\left|#1\right|}
\newcommand{\norm}[1]{\left\|#1\right\|}
\newcommand{\secref}[1]{\S\ref{#1}}
\newcommand{\transp}{^\top}
\newcommand{\inverse}{^{-1}}
\newcommand{\zeros}{\mathbf{0}}
\newcommand{\st}{x}
\newcommand{\dortho}{d_o}
\newcommand{\dschur}{d_s}
\newcommand{\A}{A}
\newcommand{\calA}{\mathcal{A}}
\newcommand{\Aortho}{\A_o}
\newcommand{\Aschur}{\A_s}
\newcommand{\B}{B}
\newcommand{\calB}{\mathcal{B}}
\newcommand{\calD}{\mathcal{D}}
\newcommand{\calC}{\mathcal{C}}
\newcommand{\calF}{\mathcal{F}}
\newcommand{\calG}{\mathcal{G}}
\newcommand{\calH}{\mathcal{H}}
\newcommand{\Bortho}{\B_o}
\newcommand{\Bschur}{\B_s}
\newcommand{\meas}{y}
\newcommand{\C}{C}
\newcommand{\control}{u}
\newcommand{\controlset}{\mathds{U}}
\newcommand{\wnoise}{w}
\newcommand{\mnoise}{\varsigma}
\newcommand{\calQ}{\mathcal{Q}}
\newcommand{\calR}{\mathcal{R}}
\newcommand{\sigalg}{\mathfrak{F}}
\newcommand{\YY}{\mathfrak Y}
\newcommand{\lra}{\longrightarrow}
\newcommand{\ee}{\mathfrak{\psi}}
\newcommand{\offset}{\boldsymbol{\eta}}
\newcommand{\gain}{\boldsymbol{\Theta}}
\newcommand{\authority}{u_{\max}}
\newcommand{\reachindex}{\kappa}
\newcommand{\reachab}{\mathrm{R}}
\newcommand{\Let}{\coloneqq}
\newcommand{\teL}{\eqqcolon}
\newtheorem{definition}{\sc Definition}
\newtheorem{example}{\sc Example}
\newtheorem{theorem}{\sc Theorem}
\theoremstyle{theorem}
\newtheorem{lemma}{\sc Lemma}
\newtheorem{proposition}{\sc Proposition}
\title{Output feedback stable stochastic predictive control with hard control constraints }
\author[P.\ K.\ Mishra]{Prabhat Kumar Mishra}
\author[D.\ Chatterjee]{Debasish Chatterjee}
\author[D.\ E.\ Quevedo]{Daniel E. Quevedo}
\address{Systems \& Control Engineering, Indian Institute of Technology Bombay, Powai, Mumbai~400076, India}
\thanks{Prabhat K. Mishra and Debasish Chatterjee are with Systems and Control Engineering, Indian Institute of Technology Bombay, Mumbai, India.\email{prabhat@sc.iitb.ac.in, dchatter@iitb.ac.in, dquevedo@ieee.org}}
\thanks{Daniel E. Quevedo is with the Department of Electrical Engineering (EIM-E), Paderborn University,   Germany.}
\thanks{There are minor mistakes in the version \cite{PDQ-LCSS}.}
\keywords{Stochastic predictive control, Kalman filter, constrained control.}
\thanks{}
\begin{document}

\pagenumbering{gobble}

\maketitle

\begin{abstract}

We present a stochastic predictive controller for discrete time linear time invariant systems under incomplete state information. Our approach is based on a suitable choice of control policies, stability constraints, and employment of a Kalman filter to estimate the states of the system from incomplete and corrupt observations. We demonstrate that this approach yields a computationally tractable problem that should be solved online periodically, and that the resulting closed loop system is mean-square bounded for any positive bound on the control actions. Our results allow one to tackle the largest class of linear time invariant systems known to be amenable to stochastic stabilization under bounded control actions via output feedback stochastic predictive control.
\end{abstract}

\section{Introduction}
Optimization based control techniques have received tremendous attention because of their wide applicability, tractability, and capability to handle a variety of constraints at the synthesis stage while minimizing some performance objective. Stochastic predictive control (SPC) is an optimization based control technique where the actions are obtained by solving a finite horizon (of length say $N$,) optimal control problem at each sampling instant involving an expected cost given the current state of the plant, where the system dynamics is affected by stochastic uncertainties. The underlying optimization problem yields a control policy \cite[\S 3.4]{ref:May-14},\cite{ mesbah_16_survey}. Receding horizon implementation of SPC then consists of solving the optimization problem every recalculation interval of length $N_r$,  ($N_r \leq N$); the first $N_r$ controls are applied to the plant, the rest are discarded, and the procedure repeated. 
\par This article addresses SPC under output feedback and hard constraints on the control inputs. While there is a growing body of work on SPC with hard control constraints under perfect state information, the output feedback (partial measurements) case is more difficult, but perhaps more important. In most practical applications the entire set of system states cannot be measured, and several predictive control schemes for systems with incomplete state information have been proposed in \cite{mayne2006robust,cannon2012estimation,farina2015,yan2005estimation,ref:Hokayem-12}. In \cite{mayne2006robust, cannon2012estimation}, all the uncertainties are assumed to be bounded, and robust control techniques were adopted. In \cite{farina2015,yan2005estimation}, the process noise is considered to be unbounded but stability and recursive feasibility have not been addressed under hard bounds on the control inputs.  
Since optimization over feedback policies gives, in general, better control performance than that over open loop input sequences \cite{kumar1986stochastic}, performing SPC with policies is recommended. Adopting this approach, when perfect state information is available, saturated disturbance feedback policies were utilized in \cite{ref:ChaHokLyg-11} for SPC. A full solution to the unconstrained SPC problem under output feedback was first provided in \cite{bosgra2003} by proposing innovation feedback, but this work did not involve hard control bounds.\footnote{Recall that the innovation sequence is a quantity in the measurement update equation of Kalman filter, found by the difference between the measured output and the estimated output obtained from the estimated states \cite[page no. 130]{simon2006optimal}.}    
By generalizing the saturated disturbance feedback approach of \cite{ref:ChaHokLyg-11}, a Kalman filter was utilized in \cite{ref:Hokayem-12} under output feedback and bounded controls. Stability of linear systems under Gaussian noise is impossible to ensure with bounded controls, if the spectral radius of the system matrix is greater than unity. The bordering case of the system with a Lyapunov stable (but not asymptotically stable) system matrix is, in general, difficult to analyse. In \cite{ref:ChaHokLyg-11}, the property of  stability of the closed-loop system was proved in the presence of large enough control authority and a Lyapunov stable system matrix. In other words, the optimal control problem in \cite{ref:Hokayem-12} turns out to be feasible only when the hard bound on the control is larger than a threshold that is a function of various statistical objects and the system data. This requirement severely restricts practical  applicability of the controller in \cite{ref:Hokayem-12}, and to overcome this restriction,  here we formulate an alternative controller. 
\par Since stochastic optimal control problems are generally not tractable, we follow the affine saturated innovation feedback policy approach of \cite{ref:Hokayem-12} to get a tractable deterministic surrogate of the underlying stochastic optimal control problem, and employ globally feasible drift conditions to ensure stochastic stability for any positive bound on control.\footnote{Recall that drift conditions \cite{chatterjee-15} relate the values of Lyapunov like functions with their conditional expectations, given the current state, at the next stage of the underlying process.} 
Beyond enlarging the applicability of the main result of \cite{ref:Hokayem-12}, in the current work we present a recursively feasible convex quadratic program (QP) to be solved periodically online as part of the SPC algorithm. 
From an algorithmic perspective, the second order cone program of \cite{ref:Hokayem-12} is replaced by a QP, which has significant numerical advantages \cite{GoldfarbSOCP}.   
\par The remainder of this article is organized as follows: In \secref{s:setup} we establish the definitions of the plant and its properties. Important ingredients of SPC under output feedback are described in \secref{s:output feedback}. We discuss stability issues in \secref{s:stability}. In \secref{s:main results} we provide our main result 
that is validated via numerical experiments in \secref{s:simulation}. We conclude in \secref{s:conclusion} and present our proofs in the Appendix.
\par Let $\R, \Nz$, and $\N$ denote the set of real numbers, the non-negative integers, and the positive integers, respectively.  \(I_d\) is the \(d\times d\) identity matrix and \( \zeros \) is the matrix of appropriate dimension with 0 entries. 
For given $\zeta$ and $r$, we define the component-wise saturation function  \(\R^{\nu}\ni z\longmapsto \sat_{r, \zeta}^\infty(z)\in\R^{\nu}\) to be 
			\[
				\bigl(\sat_{r, \zeta}^\infty(z)\bigr)^{(i)} = \begin{cases}
					z^{(i)} \zeta/r	& \text{if \(\abs{z^{(i)}} \le r\),}\\
					\zeta	 	& \text{if \(z^{(i)} > r\), and }\\
					-\zeta		& \text{otherwise,}
				\end{cases}
			\]for each \(i = 1, \ldots, \nu\).
For any sequence \((s_n)_{n\in\Nz}\) taking values in some Euclidean space, we denote by \( s_{n:k} \) the vector \(\bmat{s_n\transp & s_{n+1}\transp & \cdots & s_{n+k-1}\transp}\transp\), \(k\in\Nz\). The notation \(\EE_z[\cdot]\) stands for the conditional expectation with given \(z\). For a given vector \(C\), its $i^{th}$ component is denoted by $C^{(i)}$. 
Similarly, $C^{(j,:)}$ denote the $j^{th}$ row of a given matrix $C$. 
$\sigma_1(M)$ denotes the largest singular value of $M$, and $M^+$ is the Moore-Penrose pseudo-inverse of $M$ \cite[\S 6.1]{bernstein2009matrix}. For $\xi \in \R$ we let $\xi_+ \Let \max\{0,\xi\}$, and $\xi_- \Let \max\{0,-\xi\}$. Let $\reachab_{N}(\A, \B)$ denote the matrix $\bmat{\A^{N-1}\B & \A^{N-2}\B & \cdots & \B}$. We let $]a,b[ \; \Let \{z \in \R \mid a < z < b \}$. 

\section{Dynamics and objective function}\label{s:setup}
Consider a discrete time dynamical system 
\begin{subequations}\label{e:system}
\begin{align}
\st_{t+1} &= \A\st_t + \B\control_t + \wnoise_t  \label{e:steq} \\ 
\meas_t & = \C \st_t + \mnoise_t,
\end{align}
\end{subequations}
where $t \in \Nz$, and $\st_t \in \R^d$, $\control_t \in \R^m$, $\meas_t \in \R^q$ are the states, the control inputs, and the outputs, respectively, at time $t$. The process noise $\wnoise_t \in \R^d$ and the measurement noise $\mnoise_t \in \R^q$ are stochastic processes, the system matrices $\A,\B$ and $\C$ are known and are of appropriate dimensions. The control $\control_t$ is constrained as per:
				\begin{equation}\label{e:controlset}
				\control_t \in \controlset\Let\{v\in\R^m\mid \norm{v}_\infty \leq \authority \} \text{ for all } t.
				\end{equation}				
We have the following assumptions:				
\begin{enumerate}[label={\rm (A\arabic*)}, leftmargin=*, widest=3, align=left, start=1, nosep]
\item The pair $(\A,\B)$ is stabilizable and the pair $(\A,\C)$ is observable.  \label{a:stabilizability}
\item The initial condition, the process and the measurement noise vectors are mutually independent and normally distributed with $\st_0 \sim N(0,\Sigma_{\st_0})$, $\wnoise_t \sim N(0,\Sigma_{\wnoise}), \mnoise_t \sim N(0,\Sigma_{\mnoise})$, with $\Sigma_{\st_0} \succeq 0$, $ \Sigma_{\wnoise} \succ 0$ and $\Sigma_{\mnoise} \succ 0$. \label{as:normal distribution}
\item The system matrix $\A$ is Lyapunov stable. \footnote{ Recall that a Lyapunov stable matrix has all its eigenvalues in the closed unit disk, and those on the unit circle have equal algebraic and geometric multiplicities. The Assumption \ref{as:Lyapunov} is not needed for algorithmic tractability of our results, but is crucial for stability. In fact it is known \cite{ref:ChaRamHokLyg-12} that stability of \eqref{e:system} under bounded controls is impossible to ensure, if the spectral radius of $\A$ is larger than unity.} \label{as:Lyapunov}
\item $(\A, \Sigma_{\wnoise}^{1/2})$ is controllable.
\end{enumerate}				
\par For each $t$ let $ \YY_t \Let \{\meas_0, \cdots, \meas_t \} $ denote the set of observations upto time $t$. Let us fix an optimization horizon $N \in \N$ and recalculation interval $N_r \in \N$ such that $N_r \leq N$. We define the cost $V_t$ as
				\begin{equation}\label{e:cost}
				V_t \Let \EE_{\YY_t} \left[ \sum_{k = 0}^{N-1} (\norm{\st_{t+k}}^2_{Q_k} + \norm{\control_{t+k}}^2_{R_k}  ) + \norm{\st_{t+N}}^2_{Q_N}\right],
				\end{equation}
where, $Q_k, R_k, Q_N$ are given symmetric positive semi-definite matrices of appropriate dimensions, for $k = 0,\cdots,N-1$.	
The evolution of the system \eqref{e:system} over one optimization horizon can be described below in a compact form as:
\begin{subequations}\label{e:stacked dynamics}
\begin{align}
\st_{t:N+1} &= \calA\st_t + \calB\control_{t:N} + \calD \wnoise_{t:N}, \label{e:stacked state}\\
 \meas_{t:N+1} &= \calC \st_{t:N+1} + \mnoise_{t:N+1},
\end{align}
\end{subequations}
where $\calA$, $\calB$, $\calC$ and $\calD$ are standard matrices of appropriate dimensions. 
Similarly, the cost function \eqref{e:cost} can also be written in a compact form as:
		\begin{equation}\label{e:cost compact}
		V_t = \EE_{\YY_t} \left[ \norm{\st_{t:N+1}}^2_{\calQ} + \norm{\control_{t:N}}^2_{\calR} \right],
		\end{equation}	
where $\calQ$ and $\calR$ are standard block diagonal matrices.	
The following optimal control problem constitutes the backbone of SPC under output feedback:
\begin{equation}
		\label{e:opt control problem}
		\begin{aligned}
			& \minimize_{\control_{t:N}}	&&  \text{ objective function }\eqref{e:cost}\\
			& \sbjto	 & &  \begin{cases}				
				\text{system dynamics }\eqref{e:stacked state},\\
				\text{hard constraint on control } \eqref{e:controlset}, \\
				\text{control policy class},\\				
				\text{drift conditions},
			\end{cases}
		\end{aligned}
		\end{equation}
where we shall impose a specific control policy class and drift conditions in \secref{s:output feedback} and \secref{s:stability}, respectively.
Our SPC consists of solving \eqref{e:opt control problem} every $N_r$ time steps; see Fig.\ \ref{Fig:recalculation}. 		
Our choice of control policies will ensure algorithmic tractability and drift conditions will ensure stability of closed-loop system.
\begin{figure}[t]
\centering
\begin{adjustbox}{width = \columnwidth}
\includegraphics{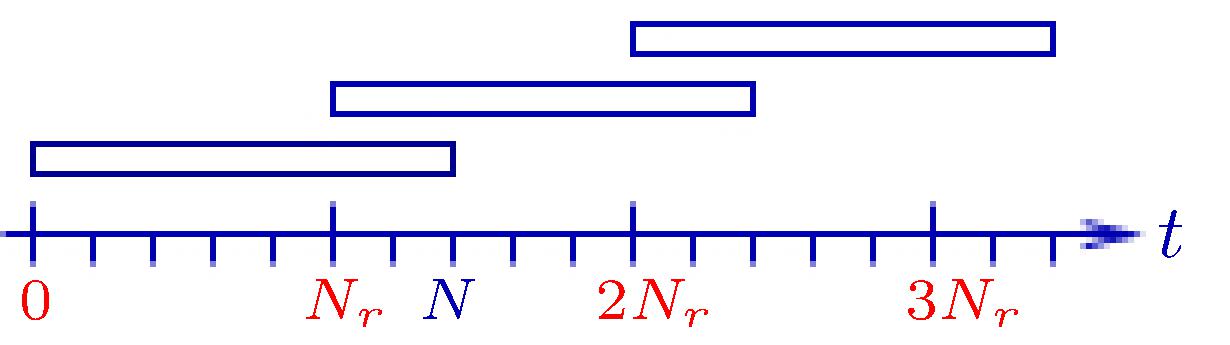}
\end{adjustbox}
\caption{Receding horizon control strategy: At $t=0$, $N$ future control commands are computed but only first $N_r$ of them are applied; the process repeats after every $N_r$ time steps.}
\label{Fig:recalculation}
\end{figure}
\section{Output feedback policies}\label{s:output feedback}
In this section we describe two important ingredients of SPC in the presence of incomplete state information. First of all, we need to estimate the states of the system given its output, and secondly, choose a suitable policy in terms of saturated innovations \cite{bosgra2003}. 
\subsection{Estimator}
For $t,s \in \Nz$ $t \geq s$ let us define $\hat{\st}_{t \mid s} \Let \EE_{\YY_s}\left[ \st_t \right]$ and \[ P_{t \mid s} \Let \EE_{\YY_s} \left[ (\st_t - \hat{\st}_{t \mid s}) (\st_t - \hat{\st}_{t \mid s}) \transp \right]. \] For simplicity of notation we denote $\hat{\st}_{t \mid t}$ by $\hat{\st}_t$ and $P_{t \mid t}$ by $P_t$. We need a fundamental result related to Kalman filtering, which is stated in \cite[p.102]{kumar1986stochastic}.  
 Let us define the estimation error vector $e_t \Let \st_t - \hat{\st}_t$. 
 Let \[ K_t \Let (\A P_t \A \transp + \Sigma_{\wnoise})\C \transp \Bigl(\C(\A P_t \A \transp + \Sigma_{\wnoise})\C\transp + \Sigma_{\mnoise} \Bigr)\inverse,\] \[
  \Gamma_t \Let I_d - K_t \C, \text{ and } \phi_t \Let \Gamma_t \A. \] 
  The filter equations in \cite[p.102]{kumar1986stochastic} are used to find the evolution of the estimator error over one optimization horizon as follows: 
\begin{equation} \label{e:est error}
e_{t:N+1} = \calF_t e_t + \calG_t \wnoise_{t:N} - \calH_t \mnoise_{t:N+1},
\end{equation}
where $\calF_t \Let \bmat{I_d \\ \phi_t \\ \phi_{t+1}\phi_t  \\ \vdots \\ \phi_{t+N-1}\cdots \phi_t }$, 
$\calG_t \Let \bmat{0 & \cdots & 0 & 0 \\ \Gamma_t & \cdots & 0 & 0 \\ \phi_{t+1}\Gamma_t & \cdots & 0 & 0 \\ \vdots & \cdots & \vdots & \vdots \\ \phi_{t+N-2} \cdots \phi_{t+1}\Gamma_t & \cdots & \Gamma_{t+N-1} & 0 \\ \phi_{t+N-1}\cdots \phi_{t+1}\Gamma_t & \cdots & \phi_{t+N-1}\Gamma_{t+N-2} & \Gamma_{t+N-1} }$, 
$\calH_t \Let \bmat{0 & 0 & \cdots & 0 & 0 \\0 & K_t & \cdots & 0 & 0 \\ 0 & \phi_{t+1} K_t & \cdots & 0 & 0 \\ 0 & \vdots & \cdots & \vdots & \vdots \\ 0 & \phi_{t+N-2} \cdots \phi_{t+1} K_t & \cdots & K_{t+N-1} & 0  \\ 0 & \phi_{t+N-1}\cdots \phi_{t+1} K_t & \cdots & \phi_{t+N-1} K_{t+N-2} & K_{t+N-1} }$. 
Since the optimization is done every $N_r$ time steps, we find $\hat{\st}_{t+N_r}$ by using \eqref{e:system} and the filter equations \cite[p.102]{kumar1986stochastic} as:
\begin{equation}\label{e:estimator st}
\hat{\st}_{t+N_r} = \A^{N_r}\hat{\st}_t + \reachab_{N_r}(\A,\B)\control_{t:N_r} + \Xi_{t+N_r-1},
\end{equation}
where
		\begin{equation*}
\begin{aligned} 
&\Xi_{t+ N_r -1} = \bmat{\A^{N_r-1}K_t\C\A & \cdots & K_{t+N_{r} -1}\C\A }e_{t:N_r} \\
& + \bmat{\A^{N_r-1}K_t\C & \A^{N_r-2}K_{t+1}\C & \cdots & K_{t+N_{r} -1}\C } \wnoise_{t:N_r} \\
& + \bmat{\A^{N_r-1}K_t & \A^{N_r-2}K_{t+1} & \cdots & K_{t+N_{r} -1} } \mnoise_{t+1:N_r} .
\end{aligned}
\end{equation*}
The quantity $\Xi_{t+N_r-1}$ in \eqref{e:estimator st} is such that $\EE_{\YY_t} \left[ \Xi_{t+N_r-1} \right] = \zeros $ and it admits a bound:
\begin{proposition}(\cite[Proposition 3]{ref:Hokayem-12})
\label{prop:bound on Xi}
There exists some $T^{\prime}, \beta > 0$ such that  
 $ \EE_{\YY_t}\left[ \norm{\Xi_{t+N_r-1}} \right] \leq \beta $ for all $t \geq T^{\prime}$.
\end{proposition}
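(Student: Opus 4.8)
The plan is to exploit the linear structure of $\Xi_{t+N_r-1}$: it is a sum of three terms, driven respectively by the stacked estimation error $e_{t:N_r}$, the process noise $\wnoise_{t:N_r}$, and the future measurement noise $\mnoise_{t+1:N_r}$. By the triangle inequality it suffices to bound the conditional expected norm of each term, and for a term of the form $Mz$ I would use $\EE_{\YY_t}[\norm{Mz}] \le \sigma_1(M)\,\EE_{\YY_t}[\norm{z}]$. The proof therefore splits into (i) a uniform bound on the largest singular values of the matrix coefficients and (ii) a uniform bound on $\EE_{\YY_t}[\norm{e_{t:N_r}}]$, $\EE_{\YY_t}[\norm{\wnoise_{t:N_r}}]$, and $\EE_{\YY_t}[\norm{\mnoise_{t+1:N_r}}]$, all valid for $t$ beyond some threshold $T'$.

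For (i), the essential input is convergence of the Kalman filter. Observability of $(\A,\C)$ from \ref{a:stabilizability} together with controllability of $(\A,\Sigma_\wnoise^{1/2})$ from (A4) guarantees that the Riccati recursion for $P_t$ converges to a unique limit, whence $K_t\to K$, $\Gamma_t\to\Gamma$, and $\phi_t\to\phi$ with $\phi$ Schur stable. I would then choose $T'$ large enough that for all $t\ge T'$ each gain $K_{t+i}$ and each transition $\phi_{t+i}$, $0\le i\le N_r-1$, lies in a fixed bounded neighborhood of its limit. Because every matrix coefficient appearing in $\Xi_{t+N_r-1}$ is a finite product of the fixed matrices $\A$, $\C$ and of these convergent factors, its largest singular value is then bounded by a constant independent of $t$.

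For (ii), I would pass to second moments via Jensen's inequality, $\EE_{\YY_t}[\norm{z}]\le\sqrt{\EE_{\YY_t}[\norm{z}^2]}$. The process and future measurement noises are independent of $\YY_t$ with fixed covariances, so $\EE_{\YY_t}[\norm{\wnoise_{t:N_r}}^2]$ and $\EE_{\YY_t}[\norm{\mnoise_{t+1:N_r}}^2]$ equal the traces of their (constant, block-diagonal) covariances and are themselves constant. For the error I would use the filtering recursion, which gives $e_{t+j}=\phi_{t+j-1}\cdots\phi_t\,e_t+(\text{noise independent of }e_t)$; conditioned on $\YY_t$ the vector $e_t$ has mean zero and covariance $P_t$ and is independent of the subsequent noises, so the cross terms vanish and $\EE_{\YY_t}[\norm{e_{t+j}}^2]$ equals $\trace\bigl(\phi_{t+j-1}\cdots\phi_t\,P_t\,\phi_t\transp\cdots\phi_{t+j-1}\transp\bigr)$ plus a fixed noise contribution. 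Since $P_t$ converges and only finitely many uniformly bounded factors $\phi$ appear, this is bounded for $t\ge T'$; summing over $0\le j\le N_r-1$ controls $\EE_{\YY_t}[\norm{e_{t:N_r}}^2]$.

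Assembling (i) and (ii), each of the three conditional expected norms is bounded by a constant for every $t\ge T'$, and their sum furnishes the desired $\beta$ (the companion fact $\EE_{\YY_t}[\Xi_{t+N_r-1}]=\zeros$ follows in parallel from the zero conditional means). I expect the main obstacle to be the error term in (ii): one must correctly account for conditioning on $\YY_t$ rather than on $\YY_{t+j}$, and the boundedness of $\EE_{\YY_t}[\norm{e_{t+j}}^2]$ hinges precisely on the convergence of $P_t$ and on the uniform boundedness of the products of $\phi$'s, both supplied by the filter's asymptotic stability established in (i).
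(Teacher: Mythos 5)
Your argument is correct and follows essentially the same route as the source the paper relies on: the paper does not prove this proposition itself but imports it from \cite[Proposition 3]{ref:Hokayem-12}, whose proof likewise splits $\Xi_{t+N_r-1}$ into its three linear terms, bounds the matrix coefficients via convergence of the Kalman gains, and controls $\EE_{\YY_t}\left[\norm{e_{t:N_r}}\right]$ through Jensen's inequality and the uniform bound on $\trace(P_t)$ (restated here as Lemma \ref{lem:estimator error bound}). Your treatment of the conditioning on $\YY_t$ versus $\YY_{t+j}$ for the error term is exactly the point that needs care, and you handle it correctly.
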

\subsection{Control policy class}
\par We select the affine parametrization of control policies \cite{skaf2010,bertsimas2010optimality, bertsimas2007constrained} under output feedback. It is demonstrated in \cite{bosgra2003} that, in the absence of control bounds, optimization over innovation feedback $(\meas_t-\hat{\meas}_t)$ leads to convex problems. The innovation sequence for one optimization horizon is then given by:
\begin{equation}
\label{e:innovation}
\meas_{t:N+1} - \hat{\meas}_{t:N+1} = \calC \calF_t e_t + \calC \calG_t \wnoise_{t:N} +(I - \calC \calH_t)\mnoise_{t:N+1},
\end{equation} 
where $\hat{\meas}_{t:N+1} = \calC \hat{\st}_{t:N+1}$, $\calF_t, \calG_t$ and $\calH_t$ are as in \eqref{e:est error}, and $\calC$ is defined in \eqref{e:stacked dynamics}. However, this policy is inadmissible because controls are bounded. To satisfy hard bounds on the controls while retaining computational tractability, we consider affine parametrization in terms of the saturated values of innovation feedback: We periodically minimize the cost \eqref{e:cost} over the following causal feedback policy for $\ell = 0,\cdots , N-1$, 
\begin{equation}\label{e:policyclass}
\control_{t+\ell} = \offset_{t+\ell} + \sum_{i=0}^{\ell} \theta_{\ell,t+i}\ee_i (\meas_{t+i} - \hat{\meas}_{t+i}),
\end{equation}
where \(\ee_i:\R^q\lra\ \R^q\) is a measurable map for each \(i\) such that  $\norm{\ee_i(\meas_{t+i}-\hat{\meas}_{t+i})}_{\infty} \leq \ee_{\max}$.  The above control policy class \eqref{e:policyclass} can be represented in a compact form as follows:
\begin{equation} \label{e:stacked control}
\control_{t:N} = \offset_t + \gain_t \ee(\meas_{t:N+1}-\hat{\meas}_{t:N+1})
\end{equation} 
where \(\offset_t\in\R^{m N}\) $\ee \Let \bmat{\ee_0\transp & \cdots & \ee_{N}\transp}$ and \(\gain_t\) is the following block triangular matrix
		\begin{equation*} \label{e:gain}
			\gain_t = \bmat{ \theta_{0, t} & \zeros & \cdots & \zeros & \zeros & \zeros\\ \theta_{1, t} & \theta_{1, t+1}  & \cdots & \zeros & \zeros & \zeros\\ \vdots & \vdots & \vdots & \vdots & \vdots & \vdots\\ \theta_{N-1, t} & \theta_{N-1, t+1} & \cdots & \theta_{N-1, t+N-2} & \theta_{N-1, t+N-1} & \zeros},
		\end{equation*}
		with each \(\theta_{k, \ell} \in \R^{m\times q}\) and  \[ \norm{\ee(\meas_{t:N+1}-\hat{\meas}_{t:N+1})}_{\infty} \leq \ee_{\max}. \] 
We choose $\ee_i$'s to be component-wise odd functions, e.g., standard saturation function, sigmoidal function, etc. 
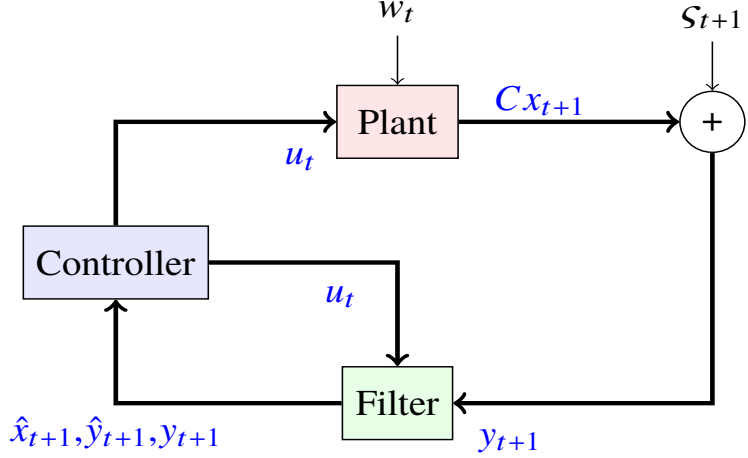
\begin{figure}
\centering
\begin{adjustbox}{width = 0.8\columnwidth, height = 0.5\columnwidth}
\begin{tikzpicture}
\tikzstyle{pinstyle} = [pin edge={to-,thin,black}]	
\tikzstyle{block} = [draw, fill=blue!10, rectangle, 
					    minimum height=2em, minimum width=1]
\tikzstyle{blockgreenl} = [draw, fill=green!10, rectangle, 
					    minimum height=2em, minimum width=1cm]
\tikzstyle{blockgreen} = [draw, fill=green!10, rectangle, 
					    minimum height=2em, minimum width=1cm] 
\tikzstyle{blockred} = [draw, fill=red!10, rectangle, 
					    minimum height=2em, minimum width=1.1cm] 
\tikzstyle{sum} = [draw, circle]					     
\node[coordinate] (0) at (0,0) {};
\node [blockred, right= 2cm of 0, pin={[pinstyle]above:$\wnoise_t$}] (plant) {Plant};			
\node[coordinate, right= 2cm of plant] (1) {1};
\node [blockgreen, below= 2cm of plant] (filter) {Filter};
\node [block, below= 1cm of 0] (controller) {Controller};
\node [sum, right= 2cm of plant, pin={[pinstyle]above:$\mnoise_{t+1}$}] (sum) {+};
\node[anchor=west, below= 1cm of controller.south, blue] {$\hat{\st}_{t+1}$,$\hat{\meas}_{t+1}$,$\meas_{t+1}$};
\node[anchor=west, left= 0.6cm of plant.south, blue] {$\control_t$};
\node[anchor=west, right= 0.6cm of filter.south, blue] {$\meas_{t+1}$};
\node[anchor=west, above= 0.8cm of filter.west, blue] {$\control_t$};
\node[anchor=north, right= 0.2cm of plant.20, blue] {$C \st_{t+1}$};
\draw[very thick ,->] (plant.east) --  (sum.west);
\draw[very thick ,->] (controller.north) |- (plant.west);
\draw[very thick ,->] (sum.south) |- (filter.east);
\draw[very thick ,->] (filter.west) -| (controller.south);
\draw[very thick ,->] (controller.east) -| (filter.north);
\end{tikzpicture}
\end{adjustbox}	
\caption{The Kalman filter employs the measurement $\meas_t$ to estimate the state $\hat{\st}_t$ and is aware of the control $\control_t$. }				
\end{figure}
\section{Stability}\label{s:stability}
It is well known that the construction of a robust positively invariant set is not possible in the presence of Gaussian noise \cite[\S 3.4]{ref:May-14}. Therefore, standard deterministic Lyapunov based arguments for proving stability are not applicable. Moreover, a linear stochastic system cannot be globally stabilized with the help of bounded controls {\cite[Theorem 1.7, Open problem 1.3]{ref:ChaRamHokLyg-12}} if the nominal plant has spectral radius larger than unity. Under this fundamental limitation, we restrict our attention to Lyapunov stable plants for ensuring stability. 
We establish the property of mean-square boundedness of the closed-loop plant under \ref{as:Lyapunov}. This class of systems is indeed, till date, the largest class of linear time invariant systems that can be globally stabilized with bounded control actions. We recall the following definition:
\begin{definition}\cite[\S III.A]{chatterjee-15}
\rm{
An $\R^d$-valued random process $(\st_t)_{t \in \Nz_0}$ with given initial measurement of output $\YY_0$ is said to be \emph{mean-square bounded} if \[ \sup_{t \in \Nz}\EE_{\YY_0}[\norm{\st_t}^2] < +\infty . \]  
}
\end{definition}   
\par Note that a Lyapunov stable system matrix $\A$ can be decomposed \cite{ref:HokChaRamChaLyg-10} into a Schur stable part $\A_s$ and an orthogonal part $\A_o$, resulting in recursion \eqref{e:estimator st} as follows: 
\begin{equation}\label{e:decomposed st}
\bmat{\hat{\st}_{t+1}^o \\ \hat{\st}_{t+1}^s} = \bmat{\Aortho \hat{\st}_{t}^o \\ \Aschur \hat{\st}_{t}^s} + \bmat{\Bortho  \\ \Bschur} \control_t + \bmat{\Xi_t^o \\ \Xi_t^s}  ,
\end{equation}
where ${\hat{\st}}_t^s \in \R^{\dschur}$ , ${\hat{\st}}_t^o \in \R^{\dortho}$, $d = \dortho + \dschur $, and $\Xi_t$ is defined in \eqref{e:estimator st}.
Linear systems with Schur stable matrices are automatically stable (in suitable sense) under bounded controls, but stability of orthogonal systems in presence of stochastic noise is \emph{not obvious}. 
The following stability constraint for orthogonal subsystem was presented in \cite{ref:Hokayem-12}: 
\begin{align}\label{e:drift enough control}
\begin{aligned}
\norm{\Aortho^{\reachindex}\hat{\st}^o_t + \reachab_{\reachindex}(\Aortho,\Bortho)\control_{t:\reachindex}} & \leq \norm{\hat{\st}^o_t} - (\beta + \dfrac{\varepsilon^\prime}{2}  )  \\ 
\text{ whenever } \norm{\hat{\st}^o_t} &  \geq \beta + \varepsilon^{\prime}, 
\end{aligned}
\end{align}
where $\varepsilon^{\prime} > 0$, $\beta$ is as defined in Proposition \ref{prop:bound on Xi}, and $\reachindex$ is the \emph{reachability index} of the matrix pair $(\Aortho, \Bortho)$.
Mean square boundedness of the closed-loop system with the above stability constraint was proved in \cite{ref:Hokayem-12} under large enough control authority. In particular, it was assumed in \cite{ref:Hokayem-12} that  \[ \authority \geq  \sigma_1 \left( \reachab_{N_r}(\Aortho,\Bortho) ^+\right) (\beta + \dfrac{\varepsilon^{\prime}}{2}). \] Let us consider the following example. 
\begin{example}
\rm{
Let us consider the dynamics \eqref{e:decomposed st} when $\Aortho=1, \dschur = 0$:
\begin{align} \label{e:scalar example}
\begin{aligned}
& \hat{\st}_{t+1}^o = \hat{\st}_t^o + \control_t + \Xi_t^o, \\
& \hat{\meas}^o_t = \hat{\st}_t^o, \quad \meas_t^o = \st_t^o + \mnoise_t^o .   
\end{aligned}
\end{align}
Then the drift condition \eqref{e:drift enough control} given in \cite{ref:Hokayem-12} requires a scalar control such that $\abs{\hat{\st}_t^o + \control_t} - \abs{\hat{\st}_t^o} \leq  - (\beta + \dfrac{\varepsilon^{\prime}}{2})$ for all $\hat{\st}_t^o$. Here equality holds when $\control_t$ has magnitude $\beta + \dfrac{\varepsilon^{\prime}}{2}$ and sign opposite to that of $\hat{\st}_t^o$. Since $\beta$ depends on the bound on the uncertainty (see Proposition \ref{prop:bound on Xi}), stability does not follow from \cite[Theorem 1]{ref:Hokayem-12} when $\authority < \beta + \dfrac{\varepsilon^{\prime}}{2}$.
$\hfill\square$
}
\end{example}
However, this lower bound on the control authority is artificial and we show that it can be removed if different drift conditions are employed, resulting in different stability constraints. 
We have the following Lemma.
\begin{lemma}
\label{lem:ortho stable}
Consider the orthogonal part of the system \eqref{e:decomposed st}. 
If there exists a \(\reachindex\)-history dependent policy such that for any given $r > 0, \text{ and } 0 < \zeta < \frac{\authority}{\sqrt{d_o}\sigma_{1}\left(\reachab_{\reachindex}(\Aortho,\Bortho)^{+}\right)}, $ and for any $ t=0, \reachindex, 2 \reachindex, ...$, the control $\control_{t:\reachindex} \in \controlset^{\reachindex}$ is chosen such that for $ j=1,2,\cdots ,d_o$ the following conditions hold:
\begin{subequations}\label{e:drift}
\begin{align}
\EE_{\YY_t} \left[ \left( (\Aortho^{\reachindex})\transp \reachab_{\reachindex}(\Aortho, \Bortho)\control_{t:\reachindex} \right)^{(j)} \right] &\leq -\zeta 
\text{ if }  \left( \hat{\st}^o_{ t} \right)^{(j)}  > r, \text{ and} \label{e:drift1} \\
\EE_{\YY_t} \left[ \left( (\Aortho^{\reachindex})\transp \reachab_{\reachindex}(\Aortho, \Bortho)\control_{t:\reachindex}\right)^{(j)} \right] & \geq \zeta 
\text{ if } \left( ( \hat{\st}^o_{ t}  \right)^{(j)}  < -r . \label{e:drift2}
\end{align}
\end{subequations}
Successive application of this policy renders the orthogonal part of the closed-loop system \eqref{e:decomposed st} mean-square bounded; i.e. there exist $T, \gamma_o > 0$ such that $\EE_{\YY_T} \left[ \norm{\hat{\st}_t^o}^2 \right] \leq \gamma_o$ for all $t \geq T$.
\end{lemma}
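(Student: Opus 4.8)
The plan is to establish a negative drift for the Euclidean norm of the subsampled estimator state and then invoke a standard Foster--Lyapunov drift theorem. Since the stabilising policy acts over blocks of length \(\reachindex\), I would first subsample the orthogonal recursion \eqref{e:decomposed st} at the instants \(t=k\reachindex\), \(k\in\Nz\), and set \(z_k \Let \hat{\st}^o_{k\reachindex}\). Iterating over one block gives
\[
z_{k+1} = \Aortho^{\reachindex} z_k + \reachab_{\reachindex}(\Aortho,\Bortho)\,\control_{k\reachindex:\reachindex} + \xi_k, \qquad \xi_k \Let \sum_{i=0}^{\reachindex-1}\Aortho^{\reachindex-1-i}\Xi^o_{k\reachindex+i}.
\]
Two structural facts drive the argument. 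First, \(\Aortho\) is orthogonal, so \(\norm{\Aortho^{\reachindex}v}=\norm{v}\) and \(\sigma_1(\Aortho^{\reachindex})=1\). Second, each one-step estimator noise \(\Xi^o_t\) is a gain times the Kalman innovation, hence \(\EE_{\YY_t}[\Xi^o_t]=\zeros\), and by the tower property \(\EE_{\YY_{k\reachindex}}[\xi_k]=\zeros\); Gaussianity together with convergence of the filter gains \(K_t\) makes \(\EE_{\YY_{k\reachindex}}[\norm{\xi_k}^2]\) uniformly bounded past the transient \(T^{\prime}\) of Proposition \ref{prop:bound on Xi}. Writing \(u_k\Let\reachab_{\reachindex}(\Aortho,\Bortho)\control_{k\reachindex:\reachindex}\), the control constraint \eqref{e:controlset} yields \(\norm{u_k}\le M\Let\sqrt{\reachindex m}\,\authority\,\sigma_1(\reachab_{\reachindex}(\Aortho,\Bortho))\).

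Next I would expand the squared norm. Using orthogonality and \(\inprod{\Aortho^{\reachindex}z_k}{u_k}=\inprod{z_k}{(\Aortho^{\reachindex})\transp u_k}\),
\[
\norm{z_{k+1}}^2 = \norm{z_k}^2 + 2\inprod{z_k}{b_k} + \norm{u_k}^2 + \norm{\xi_k}^2 + 2\inprod{\Aortho^{\reachindex}z_k}{\xi_k} + 2\inprod{u_k}{\xi_k},
\]
where \(b_k\Let(\Aortho^{\reachindex})\transp\reachab_{\reachindex}(\Aortho,\Bortho)\control_{k\reachindex:\reachindex}\) is exactly the vector appearing in \eqref{e:drift}. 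Taking \(\EE_{\YY_{k\reachindex}}[\cdot]\), the term \(\inprod{\Aortho^{\reachindex}z_k}{\xi_k}\) vanishes, \(\norm{u_k}^2\) and \(\norm{\xi_k}^2\) are bounded by constants, and \(\abs{\EE_{\YY_{k\reachindex}}\inprod{u_k}{\xi_k}}\le M\,\EE_{\YY_{k\reachindex}}[\norm{\xi_k}]\) is bounded by Cauchy--Schwarz (this term is nonzero because \(u_k\) and \(\xi_k\) share the in-block innovations, but boundedness is all I need). The crucial term is \(\EE_{\YY_{k\reachindex}}\inprod{z_k}{b_k}=\sum_{j=1}^{\dortho}(z_k)^{(j)}\EE_{\YY_{k\reachindex}}[(b_k)^{(j)}]\): for each coordinate with \(\abs{(z_k)^{(j)}}>r\), conditions \eqref{e:drift1}--\eqref{e:drift2} force \((z_k)^{(j)}\EE_{\YY_{k\reachindex}}[(b_k)^{(j)}]\le-\zeta\abs{(z_k)^{(j)}}\), whereas the coordinates with \(\abs{(z_k)^{(j)}}\le r\) contribute at most \(r\sqrt{\dortho}\,M\) in absolute value. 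Since \(\sum_{j:\,\abs{(z_k)^{(j)}}>r}\abs{(z_k)^{(j)}}\ge\norm{z_k}-r\dortho\), collecting all bounded contributions into a single constant \(C>0\) gives the block drift
\[
\EE_{\YY_{k\reachindex}}\bigl[\norm{z_{k+1}}^2\bigr] \le \norm{z_k}^2 - 2\zeta\norm{z_k} + C.
\]

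From here I would pass to the Lyapunov function \(W_k\Let\norm{z_k}\). By Jensen's inequality and concavity of the square root, \(\EE_{\YY_{k\reachindex}}[W_{k+1}]\le\sqrt{\,\norm{z_k}^2-2\zeta\norm{z_k}+C\,}\), and a one-line estimate shows this is at most \(W_k-\zeta/2\) once \(W_k\) exceeds a threshold \(R\) depending only on \(\zeta\) and \(C\). Orthogonality is decisive again for the increment bound: \(\bigl|W_{k+1}-W_k\bigr|\le\norm{z_{k+1}-\Aortho^{\reachindex}z_k}\le\norm{u_k}+\norm{\xi_k}\le M+\norm{\xi_k}\), so the conditional third moment of the increments is uniformly bounded by Gaussianity. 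These are exactly the hypotheses of the drift theorem of \cite{chatterjee-15} (in the spirit of Pemantle--Rosenthal), which yields \(\sup_k\EE_{\YY_T}[\norm{z_k}^2]<\infty\) for some \(T\ge T^{\prime}\). Finally, for a general instant \(t=k\reachindex+j\) with \(0\le j<\reachindex\), iterating \(j\) steps and using orthogonality and boundedness of the in-block controls and noise gives \(\norm{\hat{\st}^o_t}\le\norm{z_k}+(\text{bounded})\), whence \(\sup_t\EE_{\YY_T}[\norm{\hat{\st}^o_t}^2]\le\gamma_o<\infty\), which is the claim.

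The main obstacle is the passage from the coordinate-wise conditions \eqref{e:drift} to a genuine scalar drift: the cross term \(\inprod{z_k}{b_k}\) must dominate all the noise and saturation residuals, which only works after (i) exploiting orthogonality so that \(\norm{\Aortho^{\reachindex}z_k}=\norm{z_k}\) and no expansion factor appears, and (ii) switching from \(\norm{\cdot}^2\), whose drift is the sublinear quantity \(-2\zeta\norm{z_k}\), to \(\norm{\cdot}\), for which the drift is a genuine constant and the increments stay bounded despite \(z_k\) being unbounded. A secondary technical point is justifying the uniform bound on \(\EE_{\YY_{k\reachindex}}[\norm{\xi_k}^2]\) and on the control--noise cross term, both of which rest on Gaussianity and convergence of the Kalman gains rather than on Proposition \ref{prop:bound on Xi} alone.
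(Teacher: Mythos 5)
Your proof is correct, but it follows a genuinely different route from the paper's. The paper first performs the time-varying change of coordinates \(z_{\reachindex t}\Let(\Aortho\transp)^{\reachindex t}\hat{\st}^o_{\reachindex t}\), which exploits orthogonality to reduce the subsampled dynamics to a pure additive recursion \(z_{\reachindex(t+1)}=z_{\reachindex t}+\tilde\control_{\reachindex t}+\tilde\Xi_{\reachindex t}\); it then applies the Pemantle--Rosenthal theorem \emph{coordinate-wise}, once to each scalar process \(z^{(j)}_{\reachindex t}\) and once to \(-z^{(j)}_{\reachindex t}\), and assembles the bounds on \(\EE[((z^{(j)})_+)^2]\) and \(\EE[((z^{(j)})_-)^2]\) into a bound on \(\EE[\norm{z_{\reachindex t}}^2]\) using \(\abs{\delta}^2\le 2(\delta_+^2+\delta_-^2)\). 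You instead stay in the original coordinates, aggregate the componentwise conditions \eqref{e:drift} into a single scalar drift for \(\norm{\hat{\st}^o_{k\reachindex}}^2\) via the inner-product expansion and the inequality \(\sum_{j:\abs{(z_k)^{(j)}}>r}\abs{(z_k)^{(j)}}\ge\norm{z_k}-r\dortho\), pass to \(\norm{\cdot}\) by Jensen, and invoke the drift theorem exactly once. Both arguments rest on the same two pillars (norm preservation under \(\Aortho\) and moment bounds on the estimation noise past the Kalman transient), and the interpolation from the \(\reachindex\)-subsampled process to all \(t\) is identical. What the paper's rotation buys is that no Lyapunov-function manipulation is needed at all---the drift hypothesis of the theorem is verified literally in each coordinate; what your route buys is that you work directly with the conditions \eqref{e:drift} as stated on \((\hat{\st}^o_t)^{(j)}\), without introducing the rotated coordinates at all. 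Two small points to tighten: the version of the Pemantle--Rosenthal theorem quoted in the appendix requires a conditional \emph{fourth} moment bound on the increments, not a third (your increment bound \(\abs{W_{k+1}-W_k}\le M+\norm{\xi_k}\) delivers this anyway under Gaussianity); and your remark that the uniform bound on \(\EE_{\YY_{k\reachindex}}[\norm{\xi_k}^2]\) needs more than Proposition \ref{prop:bound on Xi} is well taken---the paper's own proof asserts the analogous fourth-moment bound with the same appeal to Gaussianity and gain convergence, so you are on equal footing there.
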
 
A proof of Lemma \ref{lem:ortho stable} is given in the appendix. \\
In order to implement the drift conditions \eqref{e:drift} with the help of a tractable optimization program, we consider the first $\reachindex$ blocks of \eqref{e:stacked control} 
\begin{equation} \label{e:kappa blocks control}
\control_{t:\reachindex} = (\offset_t)_{1:\reachindex m} + (\gain_t)_{1:\reachindex m } \ee(\meas_{t:N+1}-\hat{\meas}_{t:N+1})
\end{equation} 
and substitute them in \eqref{e:drift}. Let us separate the first $q$ columns of the gain matrix $\gain_t$ and the first $q$ rows of the saturated innovation sequence such that $\gain_t = \bmat{\gain_t^{(:,t)} & \gain_t^{\prime}} $ and $\ee(\meas_{t:N+1}-\hat{\meas}_{t:N+1}) = \bmat{\ee_0(\meas_t - \hat{\meas}_t) \\ \ee^{\prime}(\meas_{t+1:N} - \hat{\meas}_{t+1:N})}$, where $\gain_t^{(:,t)}$ represents the first $q$ columns of $\gain_t$, other notations are clear from the context. We get the following stability constraints for $ \varepsilon > 0$:
\begin{subequations}\label{e:stability constraint}
\begin{align}
& \quad \Biggl( (\Aortho^{ \reachindex})\transp\reachab_{\reachindex}(\Aortho, \Bortho) \left( (\offset_t)_{1: \reachindex m} + (\gain_t^{(:,t)})_{1:\reachindex m}\ee_0(\meas_t - \hat{\meas}_t) \right) \Biggr)^{(j)} \leq -\zeta \nonumber \\
 & \quad \quad \text{if }  \left( \hat{\st}^o_{t} \right)^{(j)} \geq r + \varepsilon, \text{ and } \label{e:stability constraint 1}\\
& \quad \Biggl( (\Aortho^{\reachindex})\transp\reachab_{\reachindex}(\Aortho, \Bortho)\left( (\offset_t)_{1: \reachindex m} + (\gain_t^{(:,t)})_{1:\reachindex m}\ee_0(\meas_t - \hat{\meas}_t) \right) \Biggr)^{(j)} \geq \zeta \nonumber \\  
& \quad \quad \text{if }  \left( \hat{\st}^o_{t} \right)^{(j)} \leq -r - \varepsilon, \label{e:stability constraint 2} 
\end{align}
\end{subequations}
where $r, \zeta$ and $j$ are as in \eqref{e:drift}.
\begin{example}
\rm{
Let us consider the dynamics \eqref{e:scalar example}. The conditions \eqref{e:stability constraint} provide expected drift towards the interval $]-r-\varepsilon, r+\varepsilon[$ in one step whenever $\hat{\st}_t^o$ is outside that interval. The amount of constant negative drift $\zeta$ can be selected from the interval $]0, \authority [$ to respect the hard bound on control.
}
$\hfill\square$
\end{example}   
\section{Main results}\label{s:main results}
In this section we recast the constrained optimal control problem \eqref{e:opt control problem} as a convex quadratic optimization program that is recursively feasible, and its receding horizon implementation ensures mean-square boundedness of the system states. We can select the recalculation interval $N_r$ such that $\reachindex \leq N_r \leq N$. For simplicity, we choose $N_r = \reachindex \leq N$. Let us define $\Pi_{\meas_t} = \ee_0(\meas_t - \hat{\meas}_t) \ee_0(\meas_t - \hat{\meas}_t) \transp $, $\Sigma_{\ee} = \EE\Bigl[\ee^{\prime}(\meas_{t+1:N} - \hat{\meas}_{t+1:N})\ee^{\prime}(\meas_{t+1:N} - \hat{\meas}_{t+1:N})\transp \Bigr]$, $\Sigma_{\ee^{\prime}\wnoise} = \EE \Biggl[\ee^{\prime}(\meas_{t+1:N} - \hat{\meas}_{t+1:N})\wnoise_{t:N} \transp \Biggr]$, $\Sigma_{e\ee^{\prime}} = \EE \Bigl[ \ee^{\prime} (\meas_{t+1:N} - \hat{\meas}_{t+1:N})e_t \transp \Bigr]$
and $\alpha \Let \calB\transp\calQ\calB + \calR$. We have the following theorems:
\begin{theorem}\label{th:main}
For every time $t= 0, N_r, 2N_r, \cdots$, the optimal control problem \eqref{e:opt control problem} can be written as the following convex quadratic, (globally) feasible program:
\begin{align}
\minimize_{\offset_t,\gain_t} & \quad \offset_t\transp \alpha \offset_t + \trace(\alpha \gain_t^{(:,t)} \Pi_{\meas_t} (\gain_t^{(:,t)})\transp) + \trace (\alpha \gain_t^{\prime} \Sigma_{\ee} (\gain_t^{\prime})\transp) +2\hat{\st}_t \transp \calA \transp \calQ \calB\offset_t \notag \\
& +  2(\offset_t \transp \alpha + \hat{\st}_t \transp \calA \transp \calQ \calB ) \gain_t^{(:,t)}\ee_0(\meas_t - \hat{\meas}_t) + 2\trace \Bigl( \calD \transp \calQ \calB\gain_t^{\prime} \Sigma_{\ee^{\prime}\wnoise} +  \calA\transp \calQ \calB \gain_t^{\prime} \Sigma_{e \ee^{\prime}} \Bigr)  .   \label{e:obj} \\ %
\sbjto & \quad \abs{\offset_t^{(i)}} + \norm{\gain_t^{(i,:)}}_1\ee_{\max} \leq \authority \; \forall i = 1,\cdots,Nm,  \label{e:hard bound on control} \\
& \quad \text{stability constraints } \eqref{e:stability constraint}.
\end{align}
\end{theorem}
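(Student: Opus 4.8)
The plan is to substitute the affine saturated--innovation policy \eqref{e:stacked control} together with the stacked dynamics \eqref{e:stacked state} into the compact cost \eqref{e:cost compact}, expand, take the conditional expectation \(\EE_{\YY_t}[\cdot]\), and discard every additive term that does not depend on the decision variables \((\offset_t,\gain_t)\), since such terms cannot affect the minimizer. Writing \(\st_t = \hat{\st}_t + e_t\) and \(\control_{t:N} = \offset_t + \gain_t^{(:,t)}\ee_0(\meas_t - \hat{\meas}_t) + \gain_t^{\prime}\ee^{\prime}(\meas_{t+1:N}-\hat{\meas}_{t+1:N})\), the structural facts I would use repeatedly are: (i) conditioned on \(\YY_t\) the realized innovation \(\ee_0(\meas_t - \hat{\meas}_t)\) is a known vector, so \(\Pi_{\meas_t}=\ee_0\ee_0\transp\) is data; (ii) conditioned on \(\YY_t\), both the future saturated innovations \(\ee^{\prime}\) and the future process noise \(\wnoise_{t:N}\) are zero--mean (the innovations because they are odd functions of the zero--mean Gaussian future innovation sequence, which is symmetric given \(\YY_t\)), so every term linear in \(\ee^{\prime}\) alone or in \(\wnoise_{t:N}\) alone vanishes; and (iii) the surviving second--moment terms collapse into the precomputed matrices \(\Sigma_{\ee}\), \(\Sigma_{\ee^{\prime}\wnoise}\), \(\Sigma_{e\ee^{\prime}}\) via the identity \(\EE[z\transp M w]=\trace(M\,\EE[w z\transp])\). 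Because \(e_t\mid\YY_t \sim N(0,P_t)\) with \(P_t\) deterministic, these cross--covariances do not depend on the realization of \(\YY_t\) and are therefore legitimate constant data of the program.

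Carrying this out term by term in the expansion of \(\norm{\calA\st_t + \calB\control_{t:N} + \calD\wnoise_{t:N}}_{\calQ}^2 + \norm{\control_{t:N}}_{\calR}^2\) and collecting the quadratic form \(\alpha = \calB\transp\calQ\calB + \calR\), I expect to recover exactly \eqref{e:obj}. For the constraints, the hard bound is the robust counterpart of \eqref{e:controlset}: since \(\norm{\ee}_\infty \le \ee_{\max}\), Hölder's inequality gives \(\abs{\control_{t:N}^{(i)}} \le \abs{\offset_t^{(i)}} + \norm{\gain_t^{(i,:)}}_1\ee_{\max}\), and this worst case is attained, so \eqref{e:hard bound on control} is precisely the requirement that \eqref{e:controlset} hold for every admissible realization. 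Substituting the first \(\reachindex\) blocks \eqref{e:kappa blocks control} into the drift conditions \eqref{e:drift} and using \(\EE_{\YY_t}[\ee^{\prime}]=0\) removes the \(\gain_t^{\prime}\) contribution and yields the deterministic affine constraints \eqref{e:stability constraint}.

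Convexity is then immediate: \(\alpha\succeq 0\) and \(\Pi_{\meas_t},\Sigma_{\ee}\succeq 0\) make each trace term convex, the \(\offset_t\)/\(\gain_t^{(:,t)}\) coupling assembles into the perfect square \((\offset_t + \gain_t^{(:,t)}\ee_0(\meas_t-\hat{\meas}_t))\transp\alpha\,(\offset_t + \gain_t^{(:,t)}\ee_0(\meas_t-\hat{\meas}_t))\ge 0\), and the constraints are polyhedral; hence the program is a convex QP.

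The part I expect to be the crux is global feasibility. I would exhibit the explicit feasible point \(\gain_t = \zeros\) and \((\offset_t)_{1:\reachindex m} = M^{+}g\), where \(M \Let (\Aortho^{\reachindex})\transp\reachab_{\reachindex}(\Aortho,\Bortho)\) and \(g\in\R^{d_o}\) has entries \(-\zeta\), \(+\zeta\), or \(0\) according to the three cases of \eqref{e:stability constraint}, with the remaining blocks of \(\offset_t\) set to zero. Since \(\reachindex\) is the reachability index of \((\Aortho,\Bortho)\), the matrix \(\reachab_{\reachindex}(\Aortho,\Bortho)\) has full row rank, so \(M\) is surjective, \(MM^{+}=I_{d_o}\), and \(M(\offset_t)_{1:\reachindex m}=g\); thus the stability constraints hold with equality. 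The delicate step is the hard bound: with \(\gain_t=\zeros\) it reduces to \(\norm{(\offset_t)_{1:\reachindex m}}_\infty \le \authority\), and here I would use that \(\Aortho\) is orthogonal to get \(M^{+}=\reachab_{\reachindex}(\Aortho,\Bortho)^{+}\Aortho^{\reachindex}\), whence \(\sigma_1(M^{+})=\sigma_1(\reachab_{\reachindex}(\Aortho,\Bortho)^{+})\). Then \(\norm{M^{+}g}_\infty \le \norm{M^{+}g}_2 \le \sigma_1(\reachab_{\reachindex}(\Aortho,\Bortho)^{+})\,\norm{g}_2 \le \sigma_1(\reachab_{\reachindex}(\Aortho,\Bortho)^{+})\,\zeta\sqrt{d_o} < \authority\) by the standing bound on \(\zeta\). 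This chain makes the feasible point respect the control authority for every value of \(\hat{\st}^o_t\), giving feasibility independently of the current state estimate.
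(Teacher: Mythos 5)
Your proposal is correct and follows essentially the same route as the paper's proof: the same term-by-term expansion of the cost with the zero-mean property of the future saturated innovations and the trace identities producing $\Pi_{\meas_t}$, $\Sigma_{\ee}$, $\Sigma_{\ee^{\prime}\wnoise}$, $\Sigma_{e\ee^{\prime}}$, and the same feasible point (your $M^{+}g$ with $\gain_t=\zeros$ coincides, up to the value chosen in the inactive dead zone, with the paper's $(\offset_t)_{1:\reachindex m}=-\reachab_{\reachindex}(\Aortho,\Bortho)^{+}\Aortho^{\reachindex}\sat_{r,\zeta}^{\infty}(\hat{\st}^o_t)$) verified against the control bound by the identical singular-value chain. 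Your explicit perfect-square observation for convexity is a slightly cleaner justification than the paper's appeal to composition with affine maps, but the substance is the same.
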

\par The matrices $\Sigma_{\ee\wnoise}$, $\Sigma_{\ee}$ and $\Sigma_{e\ee}$ required in the above optimization program depend on $\meas_{t:N+1}-\hat{\meas}_{t:N+1}$, which depend on the time variant quantity $P_t$. Since $P_t$ converges asymptotically to a stationary value \cite[\S 4.4]{ref:Hokayem-12}, $\Sigma_{\ee\wnoise}$, $\Sigma_{\ee}$ and $\Sigma_{e\ee}$ can be easily computed offline empirically via classical Monte Carlo methods \cite{ref:robert-13}. Computations for determining our policy were carried out in the MATLAB-based software package YALMIP \cite{ref:lofberg-04}, and were solved using SDPT3-4.0 \cite{ref:toh-06}. 

\begin{theorem}\label{claim:stbl}
The successive applications of the controls given by the optimization program in Theorem \ref{th:main} above renders the closed-loop system mean-square bounded for any positive bound on control. 
\end{theorem}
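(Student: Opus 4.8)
The plan is to deduce mean-square boundedness of the true state $\st_t$ by separately controlling the three pieces into which it decomposes. Writing $\st_t = \hat{\st}_t + e_t$ and using the decomposition \eqref{e:decomposed st} of the Lyapunov-stable estimate into an orthogonal block $\hat{\st}_t^o$ and a Schur block $\hat{\st}_t^s$, the inequality $\norm{\st_t}^2 \le 2\norm{\hat{\st}_t}^2 + 2\norm{e_t}^2$ together with the equivalence of norms under the fixed similarity transform realizing \eqref{e:decomposed st} reduces the theorem to three uniform bounds: (i) $\sup_t \EE_{\YY_0}[\norm{\hat{\st}_t^o}^2] < \infty$, (ii) $\sup_t \EE_{\YY_0}[\norm{\hat{\st}_t^s}^2] < \infty$, and (iii) $\sup_t \EE_{\YY_0}[\norm{e_t}^2] < \infty$.

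Bound (i) is delivered by Lemma \ref{lem:ortho stable}, provided I can show that the stability constraints \eqref{e:stability constraint} imply the drift conditions \eqref{e:drift} along the closed loop; this verification is the step I expect to be the \emph{principal obstacle}. The difficulty is that \eqref{e:stability constraint} constrains only the offset $\offset_t$ and the first block column (first $q$ columns) $\gain_t^{(:,t)}$ of the gain, whereas \eqref{e:drift} involves the entire realized control $\control_{t:\reachindex}$. Substituting \eqref{e:kappa blocks control} and taking the conditional expectation $\EE_{\YY_t}[\cdot]$, the offset term and the $\ee_0(\meas_t - \hat{\meas}_t)$ term are $\YY_t$-measurable and pass through unchanged, so everything hinges on establishing $\EE_{\YY_t}[\ee'(\meas_{t+1:N} - \hat{\meas}_{t+1:N})] = \zeros$. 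I would prove this from \eqref{e:innovation}: conditioned on $\YY_t$, the future innovation rows are an affine image of $(e_t, \wnoise_{t:N}, \mnoise_{t+1:N+1})$, where under \ref{as:normal distribution} and the Kalman structure $e_t \mid \YY_t \sim N(\zeros, P_t)$ and the future noises are zero-mean Gaussian independent of $\YY_t$ (the vanishing first block column of $\calH_t$ ensures $\mnoise_t$ is absent from these rows). Hence each such component is symmetric about zero given $\YY_t$, and since every $\ee_i$ is component-wise odd its conditional mean vanishes. The $\ee'$ term thus drops out, the expected drift $\EE_{\YY_t}[(\Aortho^{\reachindex})\transp \reachab_{\reachindex}(\Aortho,\Bortho)\control_{t:\reachindex}]$ equals (component-wise) the deterministic quantity constrained in \eqref{e:stability constraint}, and \eqref{e:drift} holds with threshold $r + \varepsilon$, so Lemma \ref{lem:ortho stable} yields bound (i).

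For bound (ii) I would use the geometric-decay estimate for the Schur block. Iterating $\hat{\st}_{t+1}^s = \Aschur\hat{\st}_t^s + \Bschur\control_t + \Xi_t^s$ gives $\hat{\st}_t^s = \Aschur^t\hat{\st}_0^s + \sum_{k=0}^{t-1}\Aschur^{t-1-k}(\Bschur\control_k + \Xi_k^s)$; with $\norm{\Aschur^j} \le c\rho^j$ for some $\rho \in \;]0,1[$, the hard bound \eqref{e:hard bound on control} forcing $\norm{\control_k} \le \sqrt{m}\,\authority$, and a uniform second-moment bound $\sup_k \EE_{\YY_0}[\norm{\Xi_k^s}^2] < \infty$ (which follows exactly as in Proposition \ref{prop:bound on Xi}, since $\Xi$ is affine in jointly Gaussian variables whose covariances are uniformly bounded because $P_t$ converges), Minkowski's inequality in $L^2$ bounds $\EE_{\YY_0}[\norm{\hat{\st}_t^s}^2]^{1/2}$ by a convergent geometric series, uniformly in $t$. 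Bound (iii) is immediate: under \ref{a:stabilizability} and (A4) the Kalman error covariance converges to a finite stationary value \cite[\S 4.4]{ref:Hokayem-12}, so $\EE_{\YY_0}[\norm{e_t}^2] = \trace(P_t)$ stays bounded.

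Finally I would assemble the pieces. Bounds (i)--(ii) give $\EE_{\YY_T}[\norm{\hat{\st}_t}^2] \le \gamma$ for all $t \ge T$ and some finite $T, \gamma$; since $\YY_0 \subseteq \YY_T$, the tower property transfers this to $\EE_{\YY_0}[\norm{\hat{\st}_t}^2] \le \gamma$, and combining with (iii) through $\norm{\st_t}^2 \le 2\norm{\hat{\st}_t}^2 + 2\norm{e_t}^2$ bounds $\sup_t\EE_{\YY_0}[\norm{\st_t}^2]$, which is the assertion. The clause \emph{for any positive bound on control} is secured already at the feasibility level: given any $\authority > 0$ one may choose $\zeta \in \;]0, \authority/(\sqrt{d_o}\,\sigma_1(\reachab_{\reachindex}(\Aortho,\Bortho)^+))[$, so the constant drift $\zeta$ demanded by \eqref{e:drift} never has to dominate the noise magnitude $\beta$ in a single step---precisely the mechanism that removes the artificial threshold $\authority \ge \sigma_1(\reachab_{N_r}(\Aortho,\Bortho)^+)(\beta + \varepsilon'/2)$ of \cite{ref:Hokayem-12}.
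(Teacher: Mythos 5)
Your proposal is correct and follows essentially the same route as the paper: split $\norm{\st_t}^2 \le 2\norm{\hat{\st}_t}^2 + 2\norm{e_t}^2$, bound the estimation error via the convergence of $P_t$ (the paper cites \cite[Lemma 8]{ref:Hokayem-12}), bound the Schur block by geometric decay (the paper cites \cite[Lemma 10]{ref:Hokayem-12}), invoke Lemma \ref{lem:ortho stable} for the orthogonal block after checking that \eqref{e:stability constraint} implies \eqref{e:drift} because $\EE_{\YY_t}\bigl[\ee'(\meas_{t+1:N}-\hat{\meas}_{t+1:N})\bigr]=\zeros$, and finish with the tower property; your write-up is in fact more explicit than the paper on the drift-condition verification. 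The only (easily repaired) omission is the initial segment $t=0,\dots,T-1$, for which the uniform bound from Lemma \ref{lem:ortho stable} is not yet available and one must separately note that these finitely many second moments are finite because the controls are bounded and the noise is Gaussian.
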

\par Proofs of Theorem \ref{th:main} and Theorem \ref{claim:stbl} are provided in the appendix.

\section{Numerical Experiment}\label{s:simulation}
We present numerical experiments to illustrate our results and compare our approach against \cite{ref:Hokayem-12} with same objectiive function but different stability constraints. Let us consider the four dimensional linear stochastic system \eqref{e:system} with matrices lifted from \cite{ref:Hokayem-12}:
\[ \A = \bmat{0.9 & 0 & 0 & 0\\ 0 & 1 & 0 & 0\\ 0 & 0 & 0 & -1\\ 0 & 0 & 1 & 0}, \B = \bmat{0 \\ 1\\ 0\\ 1}, C=I_4. \] The simulation data was chosen to be $\st_0 \sim N(0,I_4)$, $\wnoise_t \sim N(0,I_4)$, $\mnoise_t \sim N(0,I_4)$. 

We solved a constrained finite-horizon optimal control problem corresponding to states and control weights \( Q = I_4 , Q_f = I_4, R =1 \). We selected an optimization horizon, \(N=5\), recalculation interval \(N_r = \reachindex = 3 \) and simulated the system responses. We selected the nonlinear bounded term $\ee(\cdot)$ in our policy to be a vector of scalar sigmoidal functions \[ \varphi(\xi)=\frac{1- e^{-\xi}}{1+ e^{-\xi}} \] applied to each coordinate of innovation sequence. We plot empirical mean square bound with respect to  $\authority$ picked from the set $\{0.1, 0.5, 1,2,3,4,5,10,20\}$. All the averages are taken over 100 sample paths for 90 time steps. 
The optimization program of \cite{ref:Hokayem-12} becomes infeasible when $\authority \leq 3$, because the stability constraints used in \cite{ref:Hokayem-12} require a larger bound on the controls. Therefore, we modified the optimization algorithm of \cite{ref:Hokayem-12}. For our purposes we have forced the control values to $\zeros$ whenever the termination code of the solver is not equal to $0$. Our proposed controller performs better than this modified controller for $\authority = 0.1,0.5,1,2$, and yields similar performance for $\authority = 3$. For $\authority \geq 4$, all three controllers perform similarly in terms of the mean square bound of the closed-loop states.   

\begin{figure}[b]
	\begin{adjustbox}{width=\columnwidth, height = 0.6\columnwidth}
		\begin{tikzpicture}		
		\begin{axis}[%
		width=6.028in,
		height=4.754in,
		at={(1.011in,0.642in)},
		scale only axis,
		xmin= 0.4,
		xmax=9.5,
		xtick={1,2,3,4,5,6,7,8,9},
		xticklabels={{0.1},{0.5},{1},{2},{3},{4},{5},{10},{20}},
		ymin=0,
		ymax=3600,
		ylabel={Empirical mean square bound},
		axis background/.style={fill=white},
		legend style={legend cell align=left,align=left,draw=white!15!black}
		]
		\addplot[ybar,bar width=0.2,draw=black,fill=blue,area legend] plot table[row sep=crcr] {%
			0.8	3308.04099547188\\
			1.8	2318.99032282072\\
			2.8	1567.17764644847\\
			3.8	908.638495687647\\
			4.8	664.839935664505\\
			5.8	572.366991569963\\
			6.8	531.843299657987\\
			7.8	489.892681708289\\
			8.8	487.397233953426\\
		};
		\addlegendentry{present approach};
		
		\addplot[ybar,bar width=0.2,draw=black,fill=red,area legend] plot table[row sep=crcr] {%
			1.2	3496.94792131333\\
			2.2	2823.82066457343\\
			3.2	2063.54591157365\\
			4.2	1070.62743147457\\
			5.2	677.729555091094\\
			6.2	571.190987162819\\
			7.2	534.133998524972\\
			8.2	489.116573491372\\
			9.2	487.266043157536\\
		};
		\addlegendentry{Modified Controller of \cite{ref:Hokayem-12}};
		
		\addplot[ybar,bar width=0.2,draw=black,fill=green,area legend] plot table[row sep=crcr] {%
			1 0\\
			2 0\\
			3 0\\
			4	0\\
			5	0\\
			6	571.190987161151\\
			7	531.715074638705\\
			8	489.116573491372\\
			9	487.266043157536\\
		};
		\addlegendentry{Controller of \cite{ref:Hokayem-12}};
		
		\end{axis}
		\begin{axis}[%
		width=6.028in,
		height=4.754in,
		at={(0in,0in)},
		scale only axis,
		every outer x axis line/.append style={black},
		every x tick label/.append style={font=\color{black}},
		xmin=0,
		xmax=1,
		every outer y axis line/.append style={black},
		every y tick label/.append style={font=\color{black}},
		ymin=0,
		ymax=1,
		hide axis,
		axis x line*=bottom,
		axis y line*=left
		]
		\addplot [color=cyan,dashed,ultra thick,forget plot]
		table[row sep=crcr]{%
			0.725	0.135\\
			0.725	1\\
		};
		\addplot [color=cyan,dashed,ultra thick,forget plot]
		table[row sep=crcr]{%
			0.73	0.135\\
			0.73	1\\
		};
		\node[below right, align=left, text=blue, draw=black]
		at (rel axis cs:0.351,1) {\large{Controller of \cite{ref:Hokayem-12} is infeasible}};
		\end{axis}
		\end{tikzpicture}%
	\end{adjustbox}
	\caption{Empirical mean square bounds against $\authority$.}
	\label{Fig:normx}
\end{figure} 
\section{Epilogue}\label{s:conclusion}
We presented a tractable method for predictive control of linear stochastic control systems in the presence of incomplete and corrupt state information. We proved that given any fixed control bound, our receding horizon strategy yields a closed-loop system with mean-square bounded states whenever the system matrix $A$ is Lyapunov stable. Our results are valid when the process and measurement uncertainties are independent and Gaussian. In future work, we aim to incorporate control channel noise into our framework.



\appendix
\section*{Appendix}

\begin{lemma}\cite[Lemma 8]{ref:Hokayem-12}
\label{lem:estimator error bound}
Consider the system \eqref{e:system} and assume that $ P_0 \succeq 0$. Then, there exists a time $ T^{\prime} \in \Nz $ and constant $ \rho > 0 $ such that $ \EE_{\YY_t} \left[ \norm{\st_t - \hat{\st}_t }^2 \right] = \trace(P_t) \leq \rho$ for all $t \geq T^{\prime}$. 
\end{lemma}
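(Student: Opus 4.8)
The plan is to prove $\sup_{t\in\Nz}\EE_{\YY_0}[\norm{\st_t}^2]<\infty$ through the decomposition $\st_t=\hat{\st}_t+e_t$ together with the estimate $\norm{\st_t}^2\le 2\norm{\hat{\st}_t}^2+2\norm{e_t}^2$. By Lemma \ref{lem:estimator error bound} the error satisfies $\EE_{\YY_t}[\norm{e_t}^2]=\trace(P_t)\le\rho$ for $t\ge T'$, so it remains to bound the estimate. Using the Lyapunov-stable splitting \eqref{e:decomposed st} I would write $\hat{\st}_t=\bmat{\hat{\st}_t^o\\ \hat{\st}_t^s}$ and bound the orthogonal and Schur blocks separately; the tower property $\EE_{\YY_0}[\cdot]=\EE_{\YY_0}[\EE_{\YY_T}[\cdot]]$ and the finiteness of the finitely many initial terms (linear dynamics driven by bounded controls and finite-moment Gaussian noise) then let uniform $\EE_{\YY_T}$-bounds propagate to a uniform $\EE_{\YY_0}$-bound.

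The central step is to verify that the stability constraints \eqref{e:stability constraint} enforced by the program of Theorem \ref{th:main} imply the drift conditions \eqref{e:drift} of Lemma \ref{lem:ortho stable} at every recalculation instant $t=0,\reachindex,2\reachindex,\dots$ (recall $N_r=\reachindex$). Restricting \eqref{e:stacked control} to its first $\reachindex$ blocks and separating the leading $q$ columns as in \eqref{e:kappa blocks control} gives $\control_{t:\reachindex}=(\offset_t)_{1:\reachindex m}+(\gain_t^{(:,t)})_{1:\reachindex m}\ee_0(\meas_t-\hat{\meas}_t)+(\gain_t^{\prime})_{1:\reachindex m}\ee^{\prime}(\meas_{t+1:N}-\hat{\meas}_{t+1:N})$. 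The first two summands are $\YY_t$-measurable, while the argument $\meas_{t+1:N}-\hat{\meas}_{t+1:N}$ of the third is, by \eqref{e:innovation} and \ref{as:normal distribution}, a linear image of $e_t$, the future process noise, and the future measurement noise, hence a conditionally zero-mean symmetric Gaussian vector given $\YY_t$; since each $\ee_i$ is component-wise odd, $\EE_{\YY_t}[\ee^{\prime}(\meas_{t+1:N}-\hat{\meas}_{t+1:N})]=\zeros$. Therefore $\EE_{\YY_t}[(\Aortho^{\reachindex})\transp\reachab_{\reachindex}(\Aortho,\Bortho)\control_{t:\reachindex}]$ equals precisely the deterministic expression on the left of \eqref{e:stability constraint}, so \eqref{e:stability constraint} (with threshold $r+\varepsilon$) yields \eqref{e:drift}. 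Global feasibility of the QP guarantees such a policy exists at each recalculation, and Lemma \ref{lem:ortho stable} then gives $\sup_{t\ge T}\EE_{\YY_T}[\norm{\hat{\st}_t^o}^2]\le\gamma_o$.

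For the Schur block I would unroll $\hat{\st}_{t+1}^s=\Aschur\hat{\st}_t^s+\Bschur\control_t+\Xi_t^s$ from \eqref{e:decomposed st}. Schur stability furnishes $M>0$ and $\lambda\in\,]0,1[$ with $\norm{\Aschur^j}\le M\lambda^j$; the controls obey $\norm{\control_t}_\infty\le\authority$ by \eqref{e:hard bound on control}; and $\Xi_t^s$ is a fixed linear image of $e_{t:\reachindex}$, $\wnoise_{t:\reachindex}$, and $\mnoise_{t+1:\reachindex}$, all of uniformly bounded second moment (using $\trace(P_t)\le\rho$ and the fixed noise covariances), so $\sup_t\EE[\norm{\Xi_t^s}^2]<\infty$. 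Applying Minkowski's inequality to the unrolled sum and using $\sum_k\lambda^{t-1-k}\le(1-\lambda)\inverse$ produces a uniform bound on $\EE[\norm{\hat{\st}_t^s}^2]$. Combining the two blocks bounds $\EE_{\YY_0}[\norm{\hat{\st}_t}^2]$ uniformly, and with the error bound this establishes mean-square boundedness of $\st_t$ for every $\authority>0$.

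The main obstacle is the bridging argument of the second paragraph: one must argue rigorously that the future saturated innovations vanish in conditional expectation, which rests jointly on the oddness of the maps $\ee_i$ and on the symmetry of the conditional law of the innovation sequence given $\YY_t$, a property inherited from the Gaussianity in \ref{as:normal distribution} and the linear structure \eqref{e:innovation}. A secondary point requiring care is upgrading the first-moment control on $\Xi_{t+N_r-1}$ in Proposition \ref{prop:bound on Xi} to the uniform second-moment bound used for the Schur block, which means tracking the covariances of $e_t$ and of the noises through the explicit formula for $\Xi_{t+N_r-1}$ rather than just its norm.
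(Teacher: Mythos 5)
Your proposal does not prove the statement it was asked to prove. Lemma \ref{lem:estimator error bound} is a statement purely about the Kalman filter: the conditional mean-square estimation error \(\EE_{\YY_t}\left[\norm{\st_t-\hat{\st}_t}^2\right]=\trace(P_t)\) is uniformly bounded after some finite time \(T^{\prime}\), for any \(P_0\succeq 0\). What you outline instead is a proof of \(\sup_{t\in\Nz}\EE_{\YY_0}[\norm{\st_t}^2]<\infty\), i.e., mean-square boundedness of the closed-loop state --- that is Theorem \ref{claim:stbl} of the paper, not the lemma --- and, worse, your very first step invokes Lemma \ref{lem:estimator error bound} itself (``By Lemma \ref{lem:estimator error bound} the error satisfies \(\EE_{\YY_t}[\norm{e_t}^2]=\trace(P_t)\le\rho\)'') as a premise. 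Relative to the assigned task this is circular: the target statement is assumed, and everything that follows (the bridge from the stability constraints \eqref{e:stability constraint} to the drift conditions \eqref{e:drift}, the Schur-block estimate, the tower-property bookkeeping) concerns a different result. Those ingredients are indeed the ones the paper uses to prove Theorem \ref{claim:stbl}, but none of them is relevant here.

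A proof of the lemma itself has nothing to do with control policies, drift conditions, or the Lyapunov-stable splitting \eqref{e:decomposed st}: since the controls are \(\YY_t\)-measurable and known to the filter, the error dynamics and the covariance recursion are unaffected by them. The correct route is Riccati theory. The Kalman error covariance \(P_t\) satisfies the Riccati difference equation associated with \((\A,\C,\Sigma_{\wnoise},\Sigma_{\mnoise})\); under assumption \ref{a:stabilizability} (observability of \((\A,\C)\)) together with the standing assumption that \((\A,\Sigma_{\wnoise}^{1/2})\) is controllable, the sequence \((P_t)_{t\in\Nz}\) converges, for every initial \(P_0\succeq 0\), to the unique positive semidefinite stabilizing solution \(\bar{P}\) of the corresponding algebraic Riccati equation; a convergent sequence is eventually bounded, which furnishes \(T^{\prime}\) and \(\rho\). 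The identity \(\EE_{\YY_t}\left[\norm{\st_t-\hat{\st}_t}^2\right]=\trace(P_t)\) holds because in the linear-Gaussian setting of \ref{as:normal distribution} the conditional covariance of the error given \(\YY_t\) is the deterministic matrix \(P_t\). This is exactly why the paper imports the result as \cite[Lemma 8]{ref:Hokayem-12} rather than reproving it; your secondary concern about upgrading Proposition \ref{prop:bound on Xi} to second moments is likewise moot for this lemma.
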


\begin{theorem}[{\cite[Theorem 1, Corollary 2]{ref:PemRos-99}}]
		\label{t:PemRos-99}
			Let \((X_t)_{t\in\Nz}\) be a family of real valued random variables on a probability space \((\Omega, \sigalg, \PP)\), adapted to a filtration \((\sigalg_t)_{t\in\Nz}\). Suppose that there exist scalars \(J, M, a > 0\) such that 
\begin{align}
				\EE_{\sigalg_t}[X_{t+1} - X_t] \le -a \quad\text{on the event } X_t > J,
			\text{ and} \\
				\EE\bigl[\abs{X_{t+1} - X_t}^4\,\big|\, X_0, \ldots, X_t\bigr] \le M\quad\text{for all }t\in\Nz.
\end{align}
			Then there exists a constant \(C > 0\) such that \(\sup_{t\in\Nz}\EE\bigl[ ((X_t)_+)^2 \mid \sigalg_0 \bigr] \le C\).
		\end{theorem}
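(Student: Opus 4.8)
My plan is to exploit the Lyapunov-stable decomposition \eqref{e:decomposed st} of the estimator recursion into a Schur-stable block $\hat{\st}^s_t$ and an orthogonal block $\hat{\st}^o_t$, to prove mean-square boundedness of each block separately, and finally to transfer the bound from the estimate $\hat{\st}_t$ to the true state $\st_t$ via the estimation-error bound of Lemma \ref{lem:estimator error bound}. First I would record that, since we have fixed $N_r = \reachindex$, the receding-horizon implementation applies the control in blocks of length $\reachindex$ exactly at the instants $t = 0, \reachindex, 2\reachindex, \ldots$ required by Lemma \ref{lem:ortho stable}, and that by Theorem \ref{th:main} the program is globally feasible, so an admissible control satisfying the stability constraints exists at every recalculation.

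The crux is to show that the deterministic stability constraints \eqref{e:stability constraint} imply the stochastic drift conditions \eqref{e:drift}. I would write out the first $\reachindex$ blocks of the policy \eqref{e:kappa blocks control} and separate the current innovation from the future ones, obtaining
\[ \control_{t:\reachindex} = (\offset_t)_{1:\reachindex m} + (\gain_t^{(:,t)})_{1:\reachindex m}\ee_0(\meas_t - \hat{\meas}_t) + (\gain_t^{\prime})_{1:\reachindex m}\ee^{\prime}(\meas_{t+1:N} - \hat{\meas}_{t+1:N}). \]
Conditioned on $\YY_t$, the offset $\offset_t$ and the current innovation $\meas_t - \hat{\meas}_t$ are known, whereas the future innovations $\meas_{t+i} - \hat{\meas}_{t+i}$, $i \geq 1$, form a zero-mean Gaussian---hence symmetric---white sequence. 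Since each $\ee_i$ is odd, this forces $\EE_{\YY_t}[\ee^{\prime}(\meas_{t+1:N} - \hat{\meas}_{t+1:N})] = \zeros$, so that $\EE_{\YY_t}[\control_{t:\reachindex}] = (\offset_t)_{1:\reachindex m} + (\gain_t^{(:,t)})_{1:\reachindex m}\ee_0(\meas_t - \hat{\meas}_t)$. Premultiplying by $(\Aortho^{\reachindex})\transp\reachab_{\reachindex}(\Aortho, \Bortho)$ and extracting the $j$th coordinate would then identify \eqref{e:stability constraint} with \eqref{e:drift}, the slack between $r$ and $r + \varepsilon$ guaranteeing the drift is active on the relevant region. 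Lemma \ref{lem:ortho stable} would then deliver mean-square boundedness of the orthogonal block, i.e.\ constants $T, \gamma_o > 0$ with $\EE_{\YY_T}[\norm{\hat{\st}^o_t}^2] \leq \gamma_o$ for all $t \geq T$.

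For the Schur block I would unroll $\hat{\st}^s_{t+1} = \Aschur \hat{\st}^s_t + \Bschur \control_t + \Xi^s_t$ into a convolution: Schur stability of $\Aschur$ gives $\norm{\Aschur^k} \leq c\lambda^k$ with $0 \leq \lambda < 1$, the controls satisfy $\norm{\control_t}_\infty \leq \authority$, and $\Xi^s_t$ has uniformly bounded second moment, being a fixed linear image of $e_{t:\reachindex}$, $\wnoise_{t:\reachindex}$ and $\mnoise_{t+1:\reachindex}$ whose covariances are controlled by \ref{as:normal distribution} and Lemma \ref{lem:estimator error bound}. A geometric-series estimate then yields $\sup_t \EE[\norm{\hat{\st}^s_t}^2] < \infty$. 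Combining the two blocks bounds $\EE[\norm{\hat{\st}_t}^2]$ uniformly, and from $\st_t = \hat{\st}_t + e_t$, the inequality $\norm{\st_t}^2 \leq 2\norm{\hat{\st}_t}^2 + 2\norm{e_t}^2$, and $\EE_{\YY_t}[\norm{e_t}^2] = \trace(P_t) \leq \rho$ of Lemma \ref{lem:estimator error bound}, I would obtain a uniform bound for $t \geq T$; the finitely many earlier instants contribute finite moments, and the tower property reconciles the various conditionings to give $\sup_t \EE_{\YY_0}[\norm{\st_t}^2] < \infty$, which is the claim.

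The main obstacle I anticipate is precisely the reduction of the stochastic drift conditions to the deterministic constraints---rigorously justifying that the conditional mean of the saturated future-innovation feedback vanishes. This rests on the whiteness and conditional symmetry of the Kalman innovation sequence together with the oddness of the $\ee_i$, and it is what decouples the block-wise drift over intervals of length $\reachindex = N_r$ from the random realizations and aligns it with the schedule demanded by Lemma \ref{lem:ortho stable}.
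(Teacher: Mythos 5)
There is a fundamental mismatch here: the statement you were asked to prove is not a statement about the control system at all. It is the abstract Pemantle--Rosenthal drift theorem: for a real-valued process \((X_t)_{t\in\Nz}\) adapted to a filtration, a uniform negative conditional drift \(\EE_{\sigalg_t}[X_{t+1}-X_t]\le -a\) on the event \(\{X_t > J\}\), together with a uniform bound \(M\) on the conditional fourth moments of the increments, implies \(\sup_{t\in\Nz}\EE\bigl[((X_t)_+)^2 \mid \sigalg_0\bigr] \le C\). A proof of this must be a purely probabilistic argument about the scalar process \((X_t)\) --- e.g.\ truncation/stopping arguments, maximal inequalities, and a careful handling of overshoot above the level \(J\) using the fourth-moment condition --- and makes no reference to Kalman filters, innovation feedback, Schur/orthogonal decompositions, or receding-horizon schedules. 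The paper itself does not prove it; it imports it verbatim from the cited reference, which is why no appendix proof of it exists.

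What you have written instead is a proof sketch of Theorem \ref{claim:stbl} (closed-loop mean-square boundedness): you decompose \eqref{e:decomposed st} into Schur and orthogonal blocks, argue that the stability constraints \eqref{e:stability constraint} imply the drift conditions \eqref{e:drift}, invoke Lemma \ref{lem:ortho stable} for the orthogonal block, a geometric-series bound for the Schur block, and Lemma \ref{lem:estimator error bound} plus the tower property to pass from \(\hat{\st}_t\) to \(\st_t\). Besides being the wrong target, this route is circular as a proof of the present statement: Lemma \ref{lem:ortho stable}, which you rely on, is itself proved in the paper by defining \(X_t \Let z^{(j)}_{\reachindex t}\), verifying the two hypotheses above, and then \emph{applying} Theorem \ref{t:PemRos-99}. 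So the theorem you were asked to establish sits strictly upstream of everything in your argument, and nothing in your proposal supplies the missing probabilistic content (in particular, the step from a one-step conditional drift bound plus moment control of increments to a uniform-in-time second-moment bound on \((X_t)_+\), which is exactly the hard part of Pemantle--Rosenthal).
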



\begin{proof}[Proof of Lemma \ref{lem:ortho stable}]
Let us consider the $\reachindex$-sub-sampled process of orthogonal subsystem of \eqref{e:decomposed st} by using \eqref{e:estimator st}:
$\hat{\st}_{\reachindex (t+1)}^o = \Aortho^{\reachindex} \hat{\st}_{\reachindex t}^o + \reachab_{\reachindex}(\Aortho,\Bortho)\control_{\reachindex t : \reachindex} + \Xi_{\reachindex t+ \reachindex - 1}^o . $ 
Define $z_{\reachindex t} \Let (\Aortho \transp)^{\reachindex t} \hat{\st}_{\reachindex t}^o $. It follows that
\begin{equation}
	\begin{aligned} 
				z_{\reachindex(t+1)} & = z_{\reachindex t} + (\Aortho\transp)^{\reachindex(t+1)} \reachab_\reachindex(\Aortho, \Bortho) \control_{\reachindex t:\reachindex} + (\Aortho\transp)^{\reachindex(t+1)}\Xi_{\reachindex t + \reachindex -1}^o \\ 
&=z_{\reachindex t} + \tilde{\control}_{\reachindex t} +  \tilde{\Xi}_{\reachindex t},
			\end{aligned}
			\end{equation}
			where, $ \tilde{\control}_{\reachindex t} = (\Aortho\transp)^{\reachindex(t+1)} \reachab_\reachindex(\Aortho, \Bortho) \control_{\reachindex t : \reachindex}, \tilde{\Xi}_{\reachindex t} = (\Aortho\transp)^{\reachindex(t+1)}\Xi_{\reachindex t + \reachindex - 1}^o.$
			Let \(\sigalg_{\reachindex t}\) denote the \(\sigma\)-algebra generated by \(\{\YY_{\reachindex \ell}\,|\, \ell = 0, 1, \ldots, t\}\). 
			Now, we have
			
	\[ \EE_{\sigalg_{\reachindex t}}\bigl[ z_{\reachindex (t+1)} - z_{\reachindex t}\bigr] = \EE_{\sigalg_{\reachindex t}} [\tilde{\control}_{\reachindex t} + \tilde{\Xi}_{\reachindex t} ] = \EE_{\sigalg_{\reachindex t}} [\tilde{\control}_{\reachindex t}] \]



			Let us consider the control sequence
			\begin{equation}\label{e:suggestedControl}
								\control_{\reachindex t:\reachindex} = - \reachab_{\reachindex}(\Aortho, \Bortho)^+ \Aortho^{\reachindex (t+1)} \sat_{r, \zeta}^\infty \bigl((\Aortho\transp)^{\reachindex t} \hat{\st}^o_{\reachindex t}\bigr).
			\end{equation}
It can be easily verified that $\control_{\reachindex t + \ell} \in \controlset$ for $\ell = 0, \ldots, \reachindex-1$ whenever $0 < \zeta < \frac{\authority}{\sqrt{d_o}\sigma_{1}\left(\reachab_{\reachindex}(\Aortho,\Bortho)^{+}\right)}$.  			
For the \(j\)-th component \(z^{(j)}_{\reachindex t}\) of \(z_{\reachindex t}\) we see that 
			\begin{align*}
				\EE_{\sigalg_{\reachindex t}}\bigl[ z^{(j)}_{\reachindex (t+1)} - z^{(j)}_{\reachindex t}\bigr] &= \EE_{\sigalg_{\reachindex t}}\bigl[ \tilde{\control}_{\reachindex t}^{(j)} \bigr] \\
				& = \left( (\Aortho \transp)^{\reachindex t} \EE \left[ (\Aortho \transp)^{\reachindex} \reachab_{\reachindex} (\Aortho, \Bortho) \control_{\reachindex t : \reachindex} \right] \right)^{(j)}	=-\zeta	, 		
			\end{align*}
			 whenever $z^{(j)}_{\reachindex t} = \left( (\Aortho \transp)^{\reachindex t} \hat{\st}_{\reachindex t}^o \right)^{(j)} > r$, and similarly
			\[
				\EE_{\sigalg_{\reachindex t}}\bigl[ z^{(j)}_{\reachindex (t+1)} - z^{(j)}_{\reachindex t} \bigr] = \zeta \quad\text{whenever }z^{(j)}_{\reachindex t} < -r.
			\]
If we define $X_t \Let z_{\reachindex t}^{(j)}, J \Let r, a \Let \zeta$, we can observe that the the first condition of Theorem \ref{t:PemRos-99} is satisfied for a given control sequence \eqref{e:suggestedControl}. For an arbitrary control sequence $\control_{\reachindex t : \reachindex}$ the first condition of Theorem \ref{t:PemRos-99} is satisfied if \eqref{e:drift1} is satisfied. Similarly, for $-X_t \Let z_{\reachindex t}^{(j)}$ the first condition of Theorem \ref{t:PemRos-99} is satisfied if \eqref{e:drift2} is satisfied. The rest of the proof follows as in the proof of \cite[Theorem 1.2]{ref:ChaRamHokLyg-12}. We include the salient steps for completeness.			
A straightforward computation relying on uniform boundedness of the control and the moment boundedness of $\Xi_{\reachindex t + \reachindex - 1}$ (see Proposition \ref{prop:bound on Xi}) shows that there exists an \(M > 0\) such that
			\begin{align*}
				& \EE\Bigl[\abs{z^{(j)}_{\reachindex (t+1)} - z^{(j)}_{\reachindex t}}^4\,\Big|\, z^{(j)}_{\reachindex t}, \ldots, z^{(j)}_0\Bigr] \\
				& = \EE\Bigl[ \abs{\tilde{\control}_{\reachindex t}^{(j)} + (\tilde{\Xi}_{\reachindex t})^{(j)}}^4 \, \Big|\, z^{(j)}_{\reachindex t}, \ldots, z^{(j)}_0 \Bigr] \leq M \quad\text{for all }t \geq \frac{T}{\reachindex},
			\end{align*}
		where $T \Let \reachindex \left \lceil \frac{T^{\prime}}{\reachindex} \right \rceil \geq T^{\prime}$ and $T^{\prime}$ is defined in Proposition \ref{prop:bound on Xi}. 
			Now \cite[Theorem 1]{ref:PemRos-99} guarantees the existence of constants \(C^{(j)}_1, C^{(j)}_2 > 0\), \(j = 1, \ldots, \dortho \), such that
			\[ \EE_{\YY_T}\Bigl[ \left( \bigl(z^{(j)}_{\reachindex t}\bigr)_+ \right)^2\Bigr] \leq C^{(j)}_1\quad\text{and}\quad \EE_{\YY_T}\Bigl[ \left( \bigl(z^{(j)}_{\reachindex t}\bigr)_-\right)^2\Bigr] \leq C^{(j)}_2 \text{ for all } t \geq \frac{T}{\reachindex}. \] 
			Since \(\abs{\delta} = \delta_+ + \delta_- = \delta_+ + (-\delta)_+\) for any \(\delta \in\R\), and for any $\delta \in \R^{\dortho}$, $ \norm{\delta}^2 = \sum_{j=1}^{\dortho} \abs{\delta^{(j)}}^2 \leq 2 \sum_{j=1}^{\dortho} \bigl((\delta^{(j)}_+)^2 + (\delta^{(j)}_-)^2\bigr),$ we see at once that the preceding bounds imply \[ \EE_{\YY_T} \bigl[\norm{z_{\reachindex t}}^2\bigr] < C\quad\text{for some constant \(C > 0\)}  \text{ for all } t \geq \frac{T}{\reachindex}. \]
			Since \(\hat{\st}_{\reachindex t}^o\) is derived from \(z_{\reachindex t}\) by an orthogonal transformation, it immediately follows that $ \EE_{\YY_T}\bigl[\norm{\hat{\st}^o_{\reachindex t}}^2\bigr] \le C$ for all $t \geq \frac{T}{\reachindex}$.  
			A standard argument (e.g., as in \cite{ref:ChaRamHokLyg-12}) now suffices to conclude from mean-square boundedness of the \(\reachindex\)-subsampled process \((\hat{\st}^o_{\reachindex t})_{t\in\Nz}\) the same property of the original process \((\hat{\st}^o_{t})_{t\in\Nz}\). In particular, there exists $\gamma_o$ such that
			\[ \EE_{\YY_T}\left[ \norm{\hat{\st}^o_t}^2\right] \leq \gamma_o \text{ for all } t \geq T.
 			\] 
\end{proof}
\begin{proof}[Proof of Theorem \ref{th:main}] Consider the objective function \eqref{e:cost}.
We substitute the stacked state vector \eqref{e:stacked state} in the objective function.
\begin{align*}
& V_t =  \EE_{\YY_t} \left[ \sum_{k = 0}^{N-1} (\norm{\st_{t+k}}^2_{Q_k} + \norm{\control_{t+k}}^2_{R_k}  ) + \norm{\st_{t+N}}^2_{Q_N}\right] \\
& = \EE_{\YY_t} \left[ \norm{\calA\st_t + \calB\control_{t:N} + \calD \wnoise_{t:N}}^2_{\calQ} + \norm{\control_{t:N}}^2_{\calR} \right] \\
& = \EE_{\YY_t} \Bigl[ \norm{\calA\st_t}^2_{\calQ} + \norm{\calD\wnoise_{t:N}}^2_{\calQ} + \norm{\control_{t:N}}^2_{\alpha} + 2(\st_t \transp \calA \transp \calQ \calB + \wnoise_{t:N} \transp \calD \transp \calQ \calB  ) \control_{t:N} \\
& \quad + 2\st_t\transp\calA\transp\calQ\calD\wnoise_{t:N} \Bigr], \text{ where } \alpha = \calB\transp \calQ \calB + \calR. 
\end{align*}
Let $\beta_t \Let \EE_{\YY_t}\left[ \norm{\calA\st_t}^2_{\calQ} + \norm{\calD\wnoise_{t:N}}^2_{\calQ} + 2\st_t\transp\calA\transp\calQ\calD\wnoise_{t:N} \right] = \EE_{\YY_t}\left[ \norm{\calA\st_t}^2_{\calQ} + \norm{\calD\wnoise_{t:N}}^2_{\calQ}  \right]$, then 
\begin{equation} \label{e:objSolve}
V_t = \EE_{\YY_t} \Bigl[\norm{\control_{t:N}}^2_{\alpha} + 2(\st_t \transp \calA \transp \calQ \calB + \wnoise_{t:N} \transp \calD \transp \calQ \calB  ) \control_{t:N} \Bigr] + \beta_t .
\end{equation}
We substitute the stacked control vector \eqref{e:stacked control} in \eqref{e:objSolve} and for simplicity represent $\Delta_{\meas_t} = \meas_{t:N+1}-\hat{\meas}_{t:N+1} $ to get the following expression:
\begin{align*}
& V_t = \EE_{\YY_t} \Bigl[\norm{\offset_t + \gain_t \ee(\Delta_{\meas_t})}^2_{\alpha} + 2(\st_t \transp \calA \transp \calQ \calB + \wnoise_{t:N} \transp \calD \transp \calQ \calB  ) \bigl( \offset_t + \gain_t \ee(\Delta_{\meas_t}) \bigr) \Bigr] + \beta_t\\
& = \offset_t\transp \alpha \offset_t + \EE_{\YY_t} \Bigl[\norm{\gain_t \ee(\Delta_{\meas_t})}^2_{\alpha} + 2\offset_t \transp \alpha \gain_t \ee(\Delta_{\meas_t}) + 2(\st_t \transp \calA \transp \calQ \calB + \wnoise_{t:N} \transp \calD \transp \calQ \calB  ) \gain_t \ee(\Delta_{\meas_t}) \Bigr] \\
& + \EE_{\YY_t} \Bigl[ 2(\st_t \transp \calA \transp \calQ \calB + \wnoise_{t:N} \transp \calD \transp \calQ \calB  )\offset_t \Bigr] + \beta_t\\
& = \offset_t\transp \alpha \offset_t + \EE_{\YY_t} \Bigl[\norm{\gain_t \ee(\Delta_{\meas_t})}^2_{\alpha} + 2\offset_t \transp \alpha \gain_t \ee(\Delta_{\meas_t}) + 2(\st_t \transp \calA \transp \calQ \calB + \wnoise_{t:N} \transp \calD \transp \calQ \calB  ) \gain_t \ee(\Delta_{\meas_t}) \Bigr] \\
& \quad + 2\EE_{\YY_t} \Bigl[\st_t \transp \calA \transp \calQ \calB\offset_t \Bigr] + \beta_t .
\end{align*}
Since $\EE_{\YY_t}\left[ \st_t \right] = \hat{\st}_t$, we get the following expression:
\begin{equation}
\begin{aligned}\label{e:Vt}
& V_t = \offset_t\transp \alpha \offset_t + \EE_{\YY_t} \Bigl[\norm{\gain_t \ee(\Delta_{\meas_t})}^2_{\alpha} + 2(\offset_t \transp\alpha + \st_t \transp \calA \transp \calQ \calB + \wnoise_{t:N} \transp \calD \transp \calQ \calB  ) \gain_t \ee(\Delta_{\meas_t}) \Bigr] \\
& \quad + 2\hat{\st}_t \transp \calA \transp \calQ \calB\offset_t + \beta_t .
\end{aligned}
\end{equation}
Let us consider the term $\EE_{\YY_t} \left[ \offset_t \transp \alpha \gain_t \ee(\Delta_{\meas_t})   \right]$. By observing $\EE_{\YY_t} \left[ \ee_i( \meas_{t+i} - \hat{\meas}_{t+i}) \right] = \zeros$ for each $i = 1, \ldots, N+1$, we get
\begin{align}\label{e:etay}
\EE_{\YY_t} \left[ \offset_t \transp \alpha\gain_t \ee(\Delta_{\meas_t})   \right] &= \offset_t \transp \alpha \bmat{\theta_{0,t} \\ \theta_{1,t}\\ \vdots \\ \theta_{N-1,t}}\ee_0(\meas_t - \hat{\meas}_t) \notag \\
&=\offset_t \transp \alpha \gain_t^{(:,t)}\ee_0(\meas_t - \hat{\meas}_t),
\end{align}
where $\gain_t^{(:,t)} \Let \bmat{\theta_{0,t} \\ \theta_{1,t}\\ \vdots \\ \theta_{N-1,t}} $ represents the first $q$ columns of the gain matrix $\gain_t$. 
Let us consider the term $\EE_{\YY_t} \Bigl[\norm{\gain_t \ee(\Delta_{\meas_t})}^2_{\alpha} \Bigr]$. In order to make the offline computation easy we do the following manipulation:
\begin{align}
& \EE_{\YY_t} \Bigl[\norm{\gain_t \ee(\Delta_{\meas_t})}^2_{\alpha} \Bigr] \nonumber \\
& = \EE_{\YY_t} \Biggl[\norm{\bmat{\gain_t^{(:,t)} & \gain_t^{\prime}} \bmat{\ee_0(\meas_t - \hat{\meas}_t) \\ \ee^{\prime}(\meas_{t+1:N} - \hat{\meas}_{t+1:N})} }^2_{\alpha} \Biggr] \nonumber \\
& = \EE_{\YY_t} \Biggl[\bmat{\ee_0(\meas_t - \hat{\meas}_t) \\ \ee^{\prime}(\meas_{t+1:N} - \hat{\meas}_{t+1:N})} \transp \bmat{\gain_t^{(:,t)} & \gain_t^{\prime}}\transp  {\alpha} \bmat{\gain_t^{(:,t)} & \gain_t^{\prime}} \bmat{\ee_0(\meas_t - \hat{\meas}_t) \\ \ee^{\prime}(\meas_{t+1:N} - \hat{\meas}_{t+1:N})} \Biggr] \nonumber \\
& = \trace \Biggl( \alpha \gain_t^{(:,t)} \ee_0(\meas_t - \hat{\meas}_t) \ee_0(\meas_t - \hat{\meas}_t) \transp (\gain_t^{(:,t)})\transp  \nonumber \\
& \quad + \alpha \gain_t^{\prime} \EE_{\YY_t}\Bigl[\ee^{\prime}(\meas_{t+1:N} - \hat{\meas}_{t+1:N})\ee^{\prime}(\meas_{t+1:N} - \hat{\meas}_{t+1:N})\transp (\gain_t^{\prime})\transp \Bigr]   \Biggr) \nonumber \\
& = \trace(\alpha \gain_t^{(:,t)} \Pi_{\meas_t} (\gain_t^{(:,t)})\transp) + \trace (\alpha \gain_t^{\prime} \Sigma_{\ee} (\gain_t^{\prime})\transp) \label{e:Sigmay}, 
\end{align}
where $\Pi_{\meas_t} = \ee_0(\meas_t - \hat{\meas}_t) \ee_0(\meas_t - \hat{\meas}_t) \transp $ and $\Sigma_{\ee} = \EE\Bigl[\ee^{\prime}(\meas_{t+1:N} - \hat{\meas}_{t+1:N})\ee^{\prime}(\meas_{t+1:N} - \hat{\meas}_{t+1:N})\transp \Bigr]$.
Further, we simplify the term $\EE_{\YY_t} \Bigl[ \wnoise_{t:N} \transp \calD \transp \calQ \calB \gain_t \ee(\Delta_{\meas_t})\Bigr]$ as follows:
\begin{align}
& \EE_{\YY_t} \Bigl[\wnoise_{t:N} \transp \calD \transp \calQ \calB \gain_t \ee(\Delta_{\meas_t})\Bigr] \nonumber \\
&= \EE_{\YY_t} \Biggl[ \wnoise_{t:N} \transp \calD \transp \calQ \calB  \bmat{\gain_t^{(:,t)} & \gain_t^{\prime}} \bmat{\ee_0(\meas_t - \hat{\meas}_t) \\ \ee^{\prime}(\meas_{t+1:N} - \hat{\meas}_{t+1:N})}\Biggr] \nonumber \\
&= \trace \left( \calD \transp \calQ \calB\gain_t^{\prime} \EE_{\YY_t} \Biggl[\ee^{\prime}(\meas_{t+1:N} - \hat{\meas}_{t+1:N})\wnoise_{t:N} \transp \Biggr] \right) \nonumber \\
&= \trace \left( \calD \transp \calQ \calB\gain_t^{\prime} \Sigma_{\ee^{\prime}\wnoise} \right) \label{e:SigmaPsi},
\end{align}
where $\Sigma_{\ee^{\prime}\wnoise} = \EE \Biggl[\ee^{\prime}(\meas_{t+1:N} - \hat{\meas}_{t+1:N})\wnoise_{t:N} \transp \Biggr]$.
We simplify the term $\EE_{\YY_t} \Bigl[\st_t \transp \calA \transp \calQ \calB \gain_t \ee(\Delta_{\meas_t})\Bigr]$ as follows:
\begin{align}
& \EE_{\YY_t} \Bigl[\st_t \transp \calA \transp \calQ \calB \gain_t \ee(\Delta_{\meas_t})\Bigr] \notag \\
& = \EE_{\YY_t} \Bigl[(\st_t - \hat{\st}_t)\transp \calA \transp \calQ \calB \gain_t \ee(\Delta_{\meas_t})\Bigr] + \EE_{\YY_t} \Bigl[ \hat{\st}_t\transp \calA \transp \calQ \calB \gain_t \ee(\Delta_{\meas_t})\Bigr] \notag \\
& = \EE_{\YY_t} \Bigl[e_t \transp \calA \transp \calQ \calB \gain_t \ee(\Delta_{\meas_t})\Bigr] + \hat{\st}_t\transp \EE_{\YY_t} \Bigl[  \calA \transp \calQ \calB \gain_t \ee(\Delta_{\meas_t})\Bigr] \notag \\
& = \EE_{\YY_t} \Bigl[e_t \transp \calA \transp \calQ \calB \gain_t \ee(\Delta_{\meas_t})\Bigr] + \hat{\st}_t\transp \calA \transp \calQ \calB \gain_t^{(:,t)} \ee_0(\meas_t - \hat{\meas}_t) \notag \\
& =  \trace\Bigl(\calA \transp \calQ \calB \gain_t^{\prime} \EE_{\YY_t} \Bigl[ \ee^{\prime} (\meas_{t+1:N} - \hat{\meas}_{t+1:N})e_t \transp \Bigr] \Bigr)  + \hat{\st}_t\transp \calA \transp \calQ \calB \gain_t^{(:,t)} \ee_0(\meas_t - \hat{\meas}_t) \notag \\
& =  \trace\Bigl(\calA \transp \calQ \calB \gain_t^{\prime} \Sigma_{e\ee^{\prime}} \Bigr)  + \hat{\st}_t\transp \calA \transp \calQ \calB \gain_t^{(:,t)} \ee_0(\meas_t - \hat{\meas}_t), \label{e:Sigmae}
\end{align}
where $\Sigma_{e\ee^{\prime}} = \EE_{\YY_t} \Bigl[ \ee^{\prime} (\meas_{t+1:N} - \hat{\meas}_{t+1:N})e_t \transp \Bigr]$.
We substitute \eqref{e:etay}, \eqref{e:Sigmay}, \eqref{e:SigmaPsi}, \eqref{e:Sigmae} in \eqref{e:Vt}, and ignore the terms independeent of the decision variables. We get the desired objective function \eqref{e:obj}:
\begin{equation}
\begin{aligned}
& \offset_t\transp \alpha \offset_t + \trace(\alpha \gain_t^{(:,t)} \Pi_{\meas_t} (\gain_t^{(:,t)})\transp) + \trace (\alpha \gain_t^{\prime} \Sigma_{\ee} (\gain_t^{\prime})\transp) +2\hat{\st}_t \transp \calA \transp \calQ \calB\offset_t \\
& +  2(\offset_t \transp\alpha + \hat{\st}_t \transp \calA \transp \calQ \calB ) \gain_t^{(:,t)}\ee_0(\meas_t - \hat{\meas}_t) + 2\trace\left(\calD \transp \calQ \calB\gain_t^{\prime} \Sigma_{\ee^{\prime}\wnoise}\right) + 2\trace \Bigl( \calA\transp \calQ \calB \gain_t^{\prime} \Sigma_{e \ee^{\prime}} \Bigr)  .
\end{aligned}
\end{equation}
Therefore, the objective function in \eqref{e:cost} is equivalent to \eqref{e:obj} under the constraints \eqref{e:stacked state} and \eqref{e:stacked control}. Since the expected value of a convex function is convex and \eqref{e:obj} is obtained by substituting the affine functions of the decision variables into a quadratic function, the objective function \eqref{e:obj} is convex quadratic \cite{boyd_convex_optimization}.
The constraint \eqref{e:hard bound on control} is an affine function of the decision variables $\offset_t,\gain_t$; hence it is convex. The constraint \eqref{e:hard bound on control} is equivalent to the hard control constraint \eqref{e:controlset}. This constraint is obtained by utilizing the fact that $\ee: \R^{q(N+1)}  \rightarrow \R^{q(N+1)}$ is component-wise symmetric about the origin and the innovation sequence is Gaussian.
The stability constraints \eqref{e:stability constraint} are obviously convex. To see their feasibility, let us consider $\control_{t:\reachindex}$ \eqref{e:kappa blocks control}, set $(\offset_t)_{1:\reachindex m} = - \reachab_{\reachindex}(\Aortho, \Bortho)^+ \Aortho^{\reachindex} \sat_{r, \zeta}^\infty \bigl(\hat{\st}^o_{t}\bigr), (\gain_t)_{1:\reachindex m} = \zeros$,  and observe that $\control_{t:\reachindex}$ satisfies \eqref{e:stability constraint} for all $t$. Moreover, for $\ell = 0, \ldots, \reachindex-1$, $\norm{\control_{t + \ell}}_{\infty} \leq \norm{\control_{t + \ell}} \leq \norm{\control_{t : \reachindex}} \leq \sigma_1(\reachab_{\reachindex}(\Aortho, \Bortho)^+)\sqrt{d_o}\zeta \leq \authority$ whenever $0 < \zeta < \frac{\authority}{\sqrt{d_o}\sigma_{1}\left(\reachab_{\reachindex}(\Aortho,\Bortho)^{+}\right)}$, which implies $\control_{t+\ell} \in \controlset$ \cite[Theorem 4]{ref:PDQ-15}.
\end{proof}
\begin{proof}[Proof of Theorem \ref{claim:stbl}] 
Choose $T \Let N_r \left \lceil \frac{T^{\prime}}{N_r} \right \rceil \geq T^{\prime}$, where $T^{\prime}$ is according to the Lemma \ref{lem:estimator error bound}, then for every $t \geq T$  
\begin{align*}
\EE_{\YY_T}\left[ \norm{\st_t}^2 \right] &\leq 2\EE_{\YY_T}\left[ \norm{\st_t - \hat{\st}_t}^2\right] + 2\EE_{\YY_T}\left[ \norm{\hat{\st}_t}^2\right] \\
&\leq 2\rho + 2\EE_{\YY_T}\left[ \norm{\hat{\st}_t}^2\right] \text{ from \cite[Lemma 8]{ref:Hokayem-12}}\\
&= 2\rho + 2\left( \EE_{\YY_T}\left[ \norm{\hat{\st}^s_t}^2\right] + \EE_{\YY_T}\left[ \norm{\hat{\st}^o_t}^2\right] \right)\\
& \leq 2 \left( \rho + \gamma_s + \EE_{\YY_T}\left[ \norm{\hat{\st}^o_t}^2\right] \right) \text{ from \cite[Lemma 10]{ref:Hokayem-12}}\\
& \leq 2 \left( \rho + \gamma_s + \gamma_o \right) \text{ from Lemma \ref{lem:ortho stable}}\\
& \teL \gamma^{\prime} \text{ for all } t \geq T.
\end{align*}
By using the tower property of the conditional expectation, we get 
\begin{equation*}
\EE_{\YY_0} \left[ \EE_{\YY_T}\left[ \norm{\st_t}^2 \right] \right] = \EE_{\YY_0}\left[ \norm{\st_t}^2 \right] \leq \gamma^{\prime} \text{ for all } t \geq T.
\end{equation*} 
For $t = 0, \cdots, T-1$, $\st_t = \A^{t-1}\st_0 + \reachab_t(A,B)\bmat{\control_0\transp & \cdots & \control_{t-1}\transp}\transp + \reachab_t(\A,I_d)\bmat{\wnoise_0\transp & \cdots & \wnoise_{t-1}\transp}\transp$. Because $\wnoise_i$'s are mean zero Gaussian and independent of $\st_0$, and the controls are bounded, we have
\begin{equation*}
\EE_{\YY_0}\left[ \norm{\st_t}^2 \right] \leq \gamma_t \quad \text{ for } t = 0, \cdots, T-1.
\end{equation*}
Define $\gamma \Let \max \left\{ \gamma^{\prime}, \gamma_t \mid t = 0, \cdots,T-1 \right \}$, we conclude that 
$ \sup_{t\in \Nz} \EE_{\YY_0}\left[ \norm{\st_t}^2 \right] \leq \gamma.$   
\end{proof}


\end{document}